\newcommand*\figpath{./}
\definecolor{myred}{rgb}{0.8,0.1,0.1}
\definecolor{mygreen}{rgb}{0.1,0.8,0.1}
\definecolor{darkgreen}{rgb}{0.1,0.4,0.1}
\definecolor{mygray}{gray}{0.93}
\lstdefinestyle{ffem}{
	framerule=0.6pt,
	texcl=true,
	basicstyle=\scriptsize\ttfamily\bfseries,
	keywordstyle=\color{blue}\bfseries,
	emphstyle=\color{blue}
	\bfseries,
	commentstyle=\color{darkgreen}\bfseries
	\itshape,
	stringstyle=\itshape,
	tabsize=4,
	frame=lines,
	numbers=none,
	numberstyle=\tiny,
	breaklines=true,
	showstringspaces=false,
	morekeywords={real,plot,border,mesh,label,buildmesh,adaptmesh,cmm,problem,int2d,int1d,fespace,func,string,on,dx,dy,cout,for,if,int,ofstream,ifstream},
	literate={?}{{\`a}}1 {�}{{\`e}}1 {�}{{\'e}}1,
	moredelim=[is][\color{blue}\footnotesize\bf]{|}{|},
	moredelim=[is][\color{darkgreen}\footnotesize\bf]{|+}{+|},
	morecomment=[l]{//},
	morestring=[s]{"}{"},
	morecomment=[s]{/*}{*/},
	escapeinside={@*}{*@},
	abovecaptionskip=0cm}
\renewcommand{\vec}[1]{\boldsymbol{#1}}
\newtheorem{theorem}{Theorem}
\newtheorem{corollary}{Corollary}
\newtheorem{definition}{Definition}
\newtheorem{proposition}{Proposition}
\newtheorem{remark}{Remark}
\newtheorem{assumption}{Assumption}
\def\LOGO{}
\font\fonteupmc=pagk at 10 true pt
\font\plutotgros=pagk at 12 true pt
\def\LAN{\plutotgros}
\def\upmc{\fonteupmc}
\def\entete{
	\vbox {
		\hbox to \hsize{%
			\hskip -1 cm\vbox{\LOGO}\hskip 0.45 true cm \vbox{\hbox{\LAN
					Laboratoire de math�matiques Raphael Salem} \vskip .2 true cm \hbox{\upmc
					Universit� de Rouen} \vskip .5 true cm
				\hbox{\upmc Avenue de l'Universit�, BP.12,
					76801 Saint-�tienne-du-Rouvray}}
			\hfill}
		{\vskip .9 true cm
			\hbox{\hskip -1 cm} \vskip .1 true cm
			\hbox{\hskip -1 cm}}}}
\def\polP{\mathbb P}
\def\iRe{\frac{1}{Re}}
\newcommand{\norm}[2][]{\lVert #2 \rVert_{#1}}
\newcommand{\nx}[1]{\norm[H^1]{#1}}
\newcommand{\nq}[1]{\norm[L^2]{#1}}
\title{On the convergence of a low order Lagrange finite element approach for natural convection problems}
\author{I. Danaila, C. Legrand, F. Luddens}
\date{Universit{\'e} de Rouen Normandie\\
Laboratoire de Math{\'e}matiques Rapha\"el Salem, CNRS, UMR 6085 \\
 Avenue de l'Universit\'e, BP. 12 \\
 76801 Saint-\'Etienne-du-Rouvray, France
	}
\begin{document}
\maketitle

\begin{abstract}
The purpose of this article is to study the convergence of a low order finite element approximation for a natural convection problem. We prove that the discretization based on $P_1$ polynomials for every variable (velocity, pressure and temperature) is well-posed if used with a  penalty term in the divergence equation, to compensate the loss of an inf-sup condition. With mild assumptions on the pressure regularity, we recover convergence for the Navier-Stokes-Boussinesq system, provided the penalty term is chosen in accordance with the mesh size. We express conditions to obtain optimal order of convergence. We illustrate theoretical convergence results with extensive examples. The computational cost that can be saved by this approach is also assessed.
\end{abstract}


\section{Introduction}

The finite element method was proved to be very effective for the numerical simulation of Phase Change Materials, described by Navier-Stokes equations supplemented with Boussinesq approximation for natural convection and an enthalpy model for the evolution of temperature. A common approach is to use a single domain method \cite{ArticleToolBox,Zimmerman} for the momentum equation. Projection schemes \cite{El-Haddad_etal_2022} or Newton based algorithms \cite{ArticleToolBox,ParallelToolBox} have been shown to give accurate results. Enthalpy models have been proven to be suitable for phase change, even with different thermophysical properties between the two phases, especially when combined with adaptive mesh refinement \cite{Belhamadia_etal_2012,Belhamadia_etal_2019}.

The use of Taylor-Hood finite element for the velocity-pressure unknowns ensures convergence of the method \cite{ErrorEstimateHecht}.  The stability of the scheme is dictated by the inf-sup condition. Indeed, when this condition is uniformly satisfied at the discrete level, the well-posedness of the underlying problem is ensured. As a result, the pressure does not exhibit spurious modes. A large number of finite elements have been proven to satisfy the inf-sup condition, for instance the classical Taylor-Hood element. The lowest equal-order finite element $P_1$-$P_1$ pair does not satisfy this condition: the finite element space for the velocity is not rich enough to control the spurious pressure mode. Apparition of unphysical pressure oscillations has been highlighted in \cite[section 5.2.5]{LivreGuermondErn}. Several methods have been introduced to neutralize spurious modes.  One of the commonly employed consist to enrich the velocity space in adding one degree of freedom per element, associated with the barycentre of the element. This new finite element, called $P_{1b}$, can be used for many different equations. In order to avoid the instability problem related to the choice of $P_1$ finite element for the velocity, several methods induce smaller discrete spaces for the pressure unknown and/or use an additional stabilization: projection stabilization \cite{HighOrderTermStabilization, ErrorAnalysisLocalProjectionStabilization}, the stabilization based on two local Gauss integrations \cite{LocalGaussStabilization, ErrorAnalysisNCZhang}, symmetric pressure stabilization \cite{SymetricPressureStabilization}, and many others. This paper focuses on the stabilized finite element method based on minimal pressure stabilization procedures introduced by \cite{LivreBrezziFortin}. The aim of the study is to estimate the effect of changing the discrete problem to a penalized problem and get corresponding a priori error estimates.

There exists a vast literature on error estimations for Navier-Stokes equations. Error estimates for the Stokes problem in a bounded smooth domain with slip boundary condition have been established~\cite{PenltyMethod}, using 
$P_1$-$P_1$ or $P_{1b}$-$P_1$ finite element approximations and an additional penalty method on the boundary conditions. Estimation of the optimal order of convergence for the velocity and the pressure for the time-dependent Stokes equation with discrete inf-sup stable virtual space for $k \geq 2$ and non-divergence free is analysed in~\cite{StabilizationMethod}. Several pressure stabilizations are studied in~\cite{ShenPseudoCompressibilitymethods}, with error analysis of time discretization schemes for corresponding Navier-Stokes equations.
 
The natural convection problem has been analysed using a Backward Euler and a fully Crank-Nicolson scheme with a variational multi scale method and a stabilized term. Both schemes were proven to be unconditionally stable. Error bounds were derived for a finite element space discretization satisfying the inf-sup condition~\cite{ErrorAnalysisNCZhang}.
The stability of a finite element approximation scheme with inf-sup condition for phase change problems has been assessed in~\cite{EstimErrorNC}. Assuming standard hypotheses on the discrete spaces, existence and stability of solutions of the Galerkin scheme associated to Navier-Stokes-Boussinesq equations can be obtained~\cite{ExistanceStabilityNSB}. Corresponding Cea’s estimate for smooth solutions can also be derived. The use of mini element $P_{1b}$-$P_1$ pair was analysed in~\cite{ErrorEstimateHecht}. A priori error estimates were derived for first and second order (in time) numerical schemes.
Error bound using $P_l$-$P_l$ pairs with ($l\geq1$) have been established with different pressure stabilization, for example Local Projection Stabilization \cite{ErrorAnalysisLocalProjectionStabilization} or grad-div pressure stabilization \cite{PenltyMethod}. 

This paper focuses on the approximation scheme introduced in~\cite{ArticleToolBox}. A constant penalty term is added to the discretized mass conservation equation. From an algebraic point of view, this penalty term avoids the use of pivoting by eliminating a null block in the discretization matrix. It also ensures that the discrete pressure has zero average. We let this penalty term vary according to the mesh size, to recover convergence for the velocity and temperature unknowns, even if the inf-sup condition is not satisfied, e.g. for the $P_1$-$P_1$ pair. Using similar technique to \cite{ErrorEstimateHecht}, we give a priori estimates and numerical illustration to support the use of such elements.

The paper is organized as follows: in \S\ref{sec-equationsdiscretization}, we present the finite element approximation for the natural convection. The main result consists in Theorem~\ref{thm_improved} and is presented in \S\ref{sec-main-result}, along with immediate corollaries illustrating the convergence of the numerical scheme. In \S\ref{sec-stokesproj}, we introduce a modified projection operator onto the finite element space, and establish some approximation results. This operator allows us to prove Theorem~\ref{thm_improved} in \S\ref{sec-proof}. Finally, numerical results for the projection operator as well as the natural convection problem are reported in \S\ref{sec-numerics}, in agreement with theoretical results.

\section{Framework and discretization}\label{sec-equationsdiscretization}

\subsection{Framework}

Throughout the paper, $\Omega$ denotes a smooth bounded domain in $\mathbb R^2$. To use known regularity results, we assume that its boundary $\partial\Omega$ is of class $C^{1,1}$. We are interested in solving the natural convection problem modelled by the Navier-Stokes equations supplemented with the Boussinesq approximation. 

Denoting by $\vec{u}$, $p$ and $\theta$ the dimensionless velocity of the fluid, pressure and temperature respectively, the Navier-Stokes-Boussinesq system of equations reads: 

\begin{eqnarray}
\nabla\cdot \vec{u}&=&0, \label{eq-div} \\ \vspace{0.2cm}
\frac{\partial \vec{u}}{\partial t} + {(\vec{u}\nabla) \vec{u}} +\nabla p -\frac{1}{Re}{\nabla^2 \vec{u}} 
- f_B(\theta)\, \vec{e}_y &=&\vec F, \label{eq-qmvt} \\ \vspace{0.2cm}
\frac{\partial \theta}{\partial t} + \nabla \cdot\left(  \theta \vec{u}\right) -
\nabla \cdot\left( \frac{1}{Re Pr} \nabla \theta \right)  &=& g. \label{eq-energ} 
\end{eqnarray}
$f_B$  stands for the Boussinesq buoyancy force, assumed to be a linear function of the temperature:
\begin{equation}\label{eq-fB}
f_B(\theta) = \frac{Ra}{Pr Re^2} \theta.
\end{equation}
We denote by $Ra$, $Re$ and $Pr$ the Rayleigh, Reynolds and Prandtl numbers respectively. Finally, $\vec{F}$ and $g$ denote external force and heat source.

The system \eqref{eq-div}-\eqref{eq-qmvt}-\eqref{eq-energ} is supplemented with Dirichlet boundary conditions  on $\vec u$ and $\theta$.

\subsection{Weak formulation}

To use finite element approximations, a weak formulation of the previous system is needed. We introduce classical Hilbert spaces : 
\begin{equation}
	 X=H^1_0(\Omega), \quad Q=\left\{q\in L^2(\Omega) \mid \int_{\Omega} q=0\right\}, \quad \vec X= X\times X,
\end{equation}
and define the following bilinear and trilinear forms: for $\vec u, \vec v, \vec w\in \vec X$, $p \in Q$ and $\theta,\psi\in X$, 

\begin{eqnarray}\label{operator_form_1}
a(\vec u,\vec v) &=& \left( \nabla \vec u, \nabla \vec v\right),\\
\bar a(\theta,\phi) &=& \left( \nabla\theta, \nabla \phi\right),\\
b(\vec u,p) &=& -\left( \nabla\cdot\vec u, p\right),\\
c(\vec u,\vec v,\vec w) &=& \frac 12 \left( \vec u\nabla \vec v, \vec w\right) - \frac 12 \left( \vec u\nabla \vec w, \vec v\right),\label{operator_c} \\ 
\bar c(\vec u,\theta, \psi) &=& \frac 12 \left( \vec u\nabla \theta,\vec  \psi\right)-\frac 12 \left( \vec u\nabla \psi,\vec  \theta\right) ,\label{operator_form_2}
\end{eqnarray}
where $(u,v)=\int_{\Omega}u\cdot v$ denotes the scalar product in $L^2(\Omega)$. If $\vec u\in \vec X$ is such that $\nabla\cdot\vec u=0$, operators \eqref{operator_c}-\eqref{operator_form_2} are equivalent to
\begin{align}
c(\vec u,\vec v,\vec w) &= \left( \vec u\nabla\vec v, \vec w\right) = \int_\Omega \left(\vec u\nabla \vec v\right)\cdot \vec w, \\
\bar c(\vec u,\theta,\psi) &= \left( \vec u\nabla\theta, \psi\right) = \int_\Omega \left(\vec u\nabla \theta\right)\psi.
\end{align}

If $(\vec u, p, \theta)$ is a solution to Eqns.~\eqref{eq-div}-\eqref{eq-qmvt}-\eqref{eq-energ} and $(\vec v,q,\psi)\in \vec X\times L^2\times X$, we obtain by integration by parts the following weak formulation: 
\begin{eqnarray}
b(\vec u, q) &=&0, \label{eq-div-weak} \\ \vspace{0.2cm}
\left(\frac{\partial \vec u}{\partial t}, \vec v\right) + \frac{1}{Re}a(\vec u, \vec v) + c(\vec u,\vec u,\vec v) + b(\vec v, p) - \left(f_B(\theta)\vec e_y,\vec v\right) &=& \left( \vec F, \vec v\right),\label{eq-qmvt-weak} \\ \vspace{0.2cm}
\left(\frac{\partial \theta}{\partial t},\psi\right) + \frac{1}{Re Pr}\bar a(\theta,\psi) + \bar c(\vec u,\theta,\psi)  &=& \left(g,\psi\right). \label{eq-energ-weak} 
\end{eqnarray}

\subsection{Properties of bilinear and trilinear forms}

From definitions~\eqref{operator_form_1}-\eqref{operator_form_2}, the following classical coercivity and continuity estimates hold: there exist some positive constants $\alpha, \bar\alpha, A, \bar A$ such that, $\forall \vec u,\vec v \in \vec X$, $\forall p \in Q$ and $\forall \theta, \phi \in X,$
\begin{eqnarray}
\lvert a(\vec u,\vec v) \rvert &\leq& A\,\lVert \vec u \rVert_{H^{1}}\,\lVert \vec v \rVert_{H^{1}},\\\label{a-coercive}
\lvert a(\vec v,\vec v) \rvert&\geq& \alpha\,\lVert \vec v \rVert_{H^{1}},\\
\lvert \bar a(\theta,\phi)\rvert&\leq& \bar A\,\lVert \theta \rVert_{H^{1}} \,\lVert \phi \rVert_{H^{1}},\\
\lvert \bar{a}(\theta,\theta) \rvert &\geq& \bar{\alpha}\, \lVert \theta \rVert_{H^{1}},\\
\lvert b(\vec u,p) \rvert &\leq& \lVert \vec u \rVert_{H^{1}}\,\lVert  p \rVert_{L^2}.
\end{eqnarray}

We also use the continuity of trilinear forms $c$ and $\bar c$ : there exist positive constants $C$ and $\bar C$ such that, $\forall \vec u,\vec v, \vec w \in \vec X$ and $\forall \theta,\psi \in X,$
\begin{eqnarray}
c(\vec u,\vec v,\vec w) &\leq& C\, \lVert \nabla \vec u \rVert_{L^2}\,\lVert \nabla \vec v \rVert_{L^2}\,\lVert \vec w \rVert_{L^2}^{\frac{1}{2}}\,\lVert \nabla \vec w \rVert_{L^2}^{\frac{1}{2}},\label{eq-c-continu}\\
\bar c(\vec u,\theta,\psi) &\leq& \bar C\, \lVert \nabla\vec u \rVert_{L^2}\, \lVert \nabla\theta \rVert_{L^2}\,\lVert \psi \rVert_{L^2}^{\frac{1}{2}}\,\lVert \nabla\psi \rVert_{L^2}^{\frac{1}{2}}\label{eq-barc-continu}.
\end{eqnarray}

Owing to skew-symmetry properties, $\vec v$ and $\vec w$ can be swapped in the right hand side of \eqref{eq-c-continu}, and similarly $\theta$ and $\psi$ in the right hand side of \eqref{eq-barc-continu}. In order to get the best possible convergence rate for the natural convection problem, we rely on the continuity result of the following Proposition~\ref{prop:continuity_c_improved} that holds in the two-dimensional case.

\begin{proposition}\label{prop:continuity_c_improved}
For any $\sigma>0$, there exists a positive constant $C$ depending only on $\Omega$ and $\sigma$ such that
\begin{eqnarray}
&\forall \vec u, \vec v, \vec w,\in \vec X,\qquad \left| c(\vec u,\vec v,\vec w)\right| \leq C \norm[H^\sigma]{\vec u}\nx{\vec v}\nx{\vec w}, \label{eq-c-uHsigma}\\
&\forall \vec u, \vec v, \vec w,\in \vec X,\qquad \left| c(\vec u,\vec v,\vec w)\right| \leq C \norm[H^\sigma]{\vec v}\nx{\vec u}\nx{\vec w}\label{eq-c-vHsigma}. 
 \end{eqnarray}
\end{proposition}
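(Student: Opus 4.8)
The plan is to prove both inequalities by combining H\"older's inequality with the Sobolev embeddings available in dimension two for $\Omega\subset\mathbb R^2$: namely $H^1(\Omega)\hookrightarrow L^q(\Omega)$ for every finite $q$, and $H^\sigma(\Omega)\hookrightarrow L^{2/(1-\sigma)}(\Omega)$ for $0<\sigma<1$. First I would dispose of the case $\sigma\ge1$: there $H^\sigma\hookrightarrow H^1$ reduces both claims to the plain bound $|c(\vec u,\vec v,\vec w)|\le C\nx{\vec u}\nx{\vec v}\nx{\vec w}$, which follows at once from H\"older and $H^1\hookrightarrow L^4$. The substantive case is thus $0<\sigma<1$.

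For~\eqref{eq-c-uHsigma} I would estimate the two terms of $c$ directly. Writing $c(\vec u,\vec v,\vec w)=\tfrac12(\vec u\nabla\vec v,\vec w)-\tfrac12(\vec u\nabla\vec w,\vec v)$, I apply H\"older with exponents $1/p_1+1/2+1/p_2=1$, placing $\vec u$ in $L^{p_1}$, the differentiated factor in $L^2$, and the remaining undifferentiated factor in $L^{p_2}$. The choice $1/p_1=(1-\sigma)/2$, that is $p_1=2/(1-\sigma)$, gives $\norm[L^{p_1}]{\vec u}\lesssim\norm[H^\sigma]{\vec u}$; then $1/p_2=\sigma/2$ with $p_2=2/\sigma<\infty$, so the last factor is controlled by its $H^1$ norm through $H^1\hookrightarrow L^{p_2}$. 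Summing the two terms yields~\eqref{eq-c-uHsigma}.

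The inequality~\eqref{eq-c-vHsigma} is more delicate because in the term $(\vec u\nabla\vec v,\vec w)$ the function $\vec v$ is differentiated, and $\nabla\vec v$ cannot be controlled by $\norm[H^\sigma]{\vec v}$ alone when $\sigma<1$. The crucial step is therefore an integration by parts transferring the derivative off $\vec v$; since $\vec u,\vec w\in\vec X=X\times X$ vanish on $\partial\Omega$, the boundary term drops and one obtains
\begin{equation*}
(\vec u\nabla\vec v,\vec w)=-\left((\nabla\cdot\vec u)\,\vec v,\vec w\right)-(\vec u\nabla\vec w,\vec v).
\end{equation*}
Every term now carries $\vec v$ undifferentiated, so the same H\"older--Sobolev scheme applies: $\vec v$ goes into $L^{2/(1-\sigma)}$ (controlled by $\norm[H^\sigma]{\vec v}$), the factor $\nabla\cdot\vec u$ or $\nabla\vec w$ into $L^2$ (controlled by $\nx{\vec u}$ and $\nx{\vec w}$), and the last factor into $L^{2/\sigma}$ (controlled by the $H^1$ norm). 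Collecting the contributions gives~\eqref{eq-c-vHsigma}.

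I expect this integration by parts to be the main obstacle, both in checking that the boundary contribution vanishes and in redistributing the derivatives correctly. Two further remarks: the constant $C$ must depend on $\sigma$, since the norm of the embedding $H^\sigma\hookrightarrow L^{2/(1-\sigma)}$ degenerates as $\sigma\to0$; and if one wishes to avoid the endpoint embedding altogether, it suffices to take $p_1$ slightly below $2/(1-\sigma)$ and to compensate with a slightly larger but still finite $p_2$, which $H^1\hookrightarrow L^{p_2}$ readily accommodates.
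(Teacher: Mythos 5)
Your argument is correct and follows essentially the same route as the paper: the same three-exponent H\"older splitting with $H^\sigma\hookrightarrow L^{2/(1-\sigma)}$ and $H^1\hookrightarrow L^{2/\sigma}$ in two dimensions for \eqref{eq-c-uHsigma}, and the same integration by parts producing the identity $c(\vec u,\vec v,\vec w)=-(\vec u\nabla\vec w,\vec v)-\tfrac12((\nabla\cdot\vec u)\vec v,\vec w)$ so that $\vec v$ appears undifferentiated in \eqref{eq-c-vHsigma}. The only cosmetic difference is that the paper first applies Cauchy--Schwarz to isolate $\norm[L^2]{\nabla\vec v}$ and then H\"older on the remaining product, which is equivalent to your direct three-factor H\"older.
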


\begin{proof}
We start with $0<\sigma <1$. Note that if \eqref{eq-c-uHsigma} holds for $0<\sigma<1$, then it holds for any $\sigma'>\sigma$.

From Cauchy-Schwarz and Holder inequalities, we have
$$
\left|\int_\Omega \vec u\nabla \vec v\vec w\right| \leq \norm[L^2]{\nabla \vec v}\norm[L^2]{\vec u\vec w} \leq \norm[L^2]{\nabla \vec v}\norm[L^q]{\vec u}\norm[L^{q^*}]{\vec w},
$$
where $q>2$ is such that $\sigma=1-\frac 2q$  and $q^*$ is such that $\frac 1q+\frac 1{q^*}=\frac12$. The injections $L^q\subset H^{\sigma}$ and $L^{q^*}\subset H^1$ are continuous (see \cite{DiNezzaEtal12}), so that
$$
\left|\int_\Omega \vec u\nabla \vec v\vec w\right| \leq C \norm[L^2]{\nabla \vec v}\norm[H^\sigma]{\vec u}\nx{\vec w} \leq C \norm[H^\sigma]{\vec u}\nx{\vec v}\nx{\vec w}.
$$
From this inequality, we also obtain
$$
\left|\int_\Omega \vec u\nabla \vec w\vec v\right| \leq C \norm[H^\sigma]{\vec u}\nx{\vec v}\nx{\vec w}.
$$
Gathering the two inequalities, together with the definition of $c$ yields the desired result \eqref{eq-c-uHsigma}.

In order to prove \eqref{eq-c-vHsigma}, we use the fact that
$$
c(\vec u,\vec v,\vec w) = -\int_\Omega \vec u\nabla \vec w\vec v - \frac12\int_\Omega (\nabla \cdot \vec u)\vec v\vec w.
$$
We proceed as before : the first term is treated using $\norm[L^2]{\vec u\vec v}$, and the second one with $\norm[L^2]{\vec w\vec v}$.

\end{proof}

Finally, we recall that $\vec X\times Q$ satisfies the inf-sup condition (also called LBB condition, e.g. \cite{brezzi_1974}): there exists $\beta>0$ only depending on $\Omega$ such that, for all $p\in Q$,
\begin{equation}\label{eq:LBB_condition}
\beta \nq{p} \leq \sup_{\vec u\in\vec X\backslash\{0\}}\frac{ b(\vec u,p) }{\nx{\vec u}}.
\end{equation}

\subsection{Finite element approximation}

Let us introduce a family of uniform and regular meshes $\mathcal T_h$, indexed by $h$ (typically the size of a triangle). Given an integer $\ell$, and an element $K\in \mathcal{T}_h$, we denote by ${P}_{\ell}(K)$ the space of polynomials of degree less than, or equal to $\ell$, defined on $K$. We introduce the following finite element spaces 
\begin{eqnarray}
 X_h&:=&\{v_h\in C(\Omega)^d ;  v_h\mid_K\in {P}_{p_u}(K)^d, \quad \forall K \in \mathcal{T}_h\}\\
 Q_h&:=&\{q_h\in C(\Omega)^d ;  q_h\mid_K\in {P}_{p_p}(K)^d, \quad \forall K \in \mathcal{T}_h\}\\
 W_h&:=&\{w_h\in C(\Omega)^d ;  w_h\mid_K\in {P}_{p_\theta}(K)^d, \quad \forall K \in \mathcal{T}_h\}
\end{eqnarray}
where $p_u\geq1$, $p_p\geq 1$ and $p_\theta\geq1$. In the applications, we want to use $p_u=p_p=p_\theta=1$.

We are interested in the approximation of the natural convection problem for $t\in[0;t_f]$ where $t_f$ is a given final time. Let us choose $N_f$ a number of time steps, and define $\delta t = \frac{t_f}{N_f}$.  We denote by $t_n = n\delta t$ for any integer $n$ and $(\vec u_h^n,p_h^n,\theta_h^n)\in X_h\times Q_h\times W_h$ our approximation of $(\vec u^n,p^n,\theta^n)$, where $\vec u^n$ stands for $\vec u(t_n)$ (the same for $p^n,\theta^n$). Introducing the notations
$$
\delta_t \vec u_h^n = \frac{\vec u_h^{n}-\vec u_h^{n-1}}{\delta t}, \quad \delta_t \theta_h^n = \frac{\theta_h^{n}-\theta_h^{n-1}}{\delta t},
$$
the discretized system corresponding to~\eqref{eq-div-weak}-\eqref{eq-energ-weak} is: 

Find $(\vec{u}_h^{n+1},p_h^{n+1},\theta_h^{n+1})$ such that, for any $(\vec v_h,q_h,\psi_h)\in X_h\times Q_h\times W_h$,
\begin{eqnarray} 
b(\vec u_h^{n+1},q_h)-\gamma_h(p_h^{n+1},q_h)&=&0, \label{eq-div-discrete} \\ 
\left(\delta_t\vec u_h^{n+1},\vec v_h\right) +\frac{1}{Re}a(\vec{u}_h^{n+1},\vec v_h) + c(\vec u_h^{n+1},\vec u_h^{n+1},\vec v_h) &\nonumber \\+b( \vec v_h,p_h^{n+1})
-\left(f_B(\theta_h^{n+1})\vec{e}_y ,\vec v_h\right) &=& \left(\vec F^{n+1},\vec v_h\right) , \label{eq-qmvt-discrete} \\ 
\left(\delta_t\theta_h^{n+1},\psi_h\right) +
\frac{1}{Re Pr}\bar{a} (\theta_h^{n+1} , \psi_h)   + \bar{c}(\vec u_h^{n+1},\theta_h^{n+1},\psi_h)   &=& \left(g^{n+1},\psi_h\right).\label{eq-energ-discrete} \label{Variational_formulation_discrete} 
\end{eqnarray}

The resulting system is a non linear problem solved using a Newton algorithm, as in \cite{ArticleToolBox}. $\gamma_h$ is a positive constant that might depend on the mesh size. This penalty parameter is the key player in our analysis, and deserve some remarks:
\begin{itemize}
\item it is well-known that, using Taylor-Hood element for the velocity-pressure for example allows one to use $\gamma_h=0$. However, from a computational point of view, this leads to a invertible matrix with a zero diagonal block. Hence, it requires some pivoting. Letting $\gamma_h>0$ removes the necessity of using pivots.
\item since $Q_h$ is not a subspace of $Q$ (there is no constraint on the mean value of an element of $Q_h$), taking $\gamma_h>0$ together with the boundary conditions on $\vec u_h$ and the Stokes formula ensures that the pressure has zero mean value. In this respect, this term can be viewed as a \emph{penalty} term.
\item even if the couple $X_h\times Q_h$ does not satisfy an inf-sup condition, taking $\gamma_h>0$ will provide the well-posedness of the linearised system, allowing the use of the Lax-Milgram lemma. In this respect, it can also be viewed as a \emph{stabilization} term. See also details in \cite[Rem. 4.3, p. 67]{LivreGiraultRaviart}.
\item it is clear that using a constant value for $\gamma_h$ cannot lead to convergence when $h$ goes to zero. We will show however that, with a suitable choice of $\gamma_h$, we can obtain convergence for our problem.
\end{itemize}

Let us conclude this section with the introduction of suitable projection operators. We assume that there exists a family of operators $\mathcal C_h: L^2(\Omega)\to Q_h$ that satisfies the following properties: there exists $C$ independent of $h$ such that, for any $0\leqslant s < \frac 32$ , $s\leqslant r\leqslant 1+p_p$ and $q\in H^r(\Omega)$, the following inequality holds:
\begin{align}\label{eq:ClementP}
\lVert q - \mathcal C_h q\rVert_{H^s} &\leq Ch^{r-s}\lVert q \rVert_{H^r}.
\end{align}
As projection operators, one might think of the Clément interpolant, or the Scott-Zhang interpolant \cite{Scott-Zhang}. Abusing the notations, we also denote by $\mathcal C_h$ the $H^1$ projections from $\vec X$ to $X_h$ and from $X$ to $W_h$. For these operators, \eqref{eq:ClementP} is also satisfied since it can be inferred from their definitions
\begin{align}
a(\mathcal C_h\vec u,\vec v_h) &= a(\vec u,\vec v_h) \qquad \forall \vec u\in \vec X, \vec v_h\in X_h, \label{eq:ClementU} \\
\bar a(\mathcal C_h\theta,\psi_h) &= \bar a(\theta,\psi_h) \qquad \forall \theta\in X, \psi_h\in W_h. \label{eq:ClementT}
\end{align}

\section{Main convergence result}\label{sec-main-result}
\subsection{Notations and assumptions}
Let $0<\sigma<1$ (close to $0$) and define $\tilde \theta_h^{n}=\mathcal C_h\theta^n$. We define $(\vec{\tilde u}_h^n,\tilde p_h^n)=\polP_{\gamma_h,0}^{X_h,Q_h}(\vec u^n,p^n)$, that is: for any $\vec v_h\in X_h$ and any $q_h\in Q_h$,
$$
\frac1{Re}a(\vec{\tilde u}_h^n, \vec v_h) + b(\vec v_h, \tilde p_h^n) - b(\vec{\tilde u}_h^n, q_h) + \gamma_h(\tilde p_h^n, q_h) = \frac1{Re}a(\vec{ u}^n, \vec v_h) + b(\vec v_h, p^n) - b(\vec{u}^n, q_h).
$$
The approximations properties of this operator are detailed in Section~\ref{sec-stokesproj}. The errors are decomposed as:
\begin{align*}
\vec e_u^{n} &= \vec u^n - \vec u_h^n = \left( \vec u^n - \vec{\tilde u}_h^n\right) - \left( \vec u_h^n - \vec{\tilde u}_h^n\right)  = \eta_u^n - \varphi_u^n, \\
e_p^n &= p^n - p_h^n = \left( p^n - \tilde p_h^n\right) - \left( p_h^n-\tilde p_h^n\right) = \eta_p^n-\varphi_p^n, \\
e_\theta^n &= \theta^n - \theta_h^n = \left( \theta^n - \tilde \theta_h^n\right) - \left( \theta_h^n-\tilde \theta_h^n\right) = \eta_\theta^n-\varphi_\theta^n.
\end{align*} 
We also define $R_u^n$ and $R_\theta^n$ as :
$$
R_u^n = \partial_t \vec u^n - \delta_t \vec u^n,\quad R_\theta^n = \partial_t \theta^n - \delta_t \theta^n.
$$
We can use $(\vec v_h,q_h,\psi_h)\in X_h\times Q_h\times W_h$, as a test function in the weak formulation \eqref{eq-div-weak}-\eqref{eq-qmvt-weak}-\eqref{eq-energ-weak} at time $t_{n+1}$ so that:
\begin{eqnarray}
b(\vec u^{n+1}, q_h) &=&0, \label{eq-div-weak-disc} \\ \vspace{0.2cm}
\left(\delta_t \vec u^{n+1}, \vec v_h\right) + \frac{1}{Re}a(\vec u^{n+1}, \vec v_h) + c(\vec u^{n+1},\vec u^{n+1},\vec v_h)&\nonumber \\ + b(\vec v_h, p^{n+1}) - \left(f_B(\theta^{n+1})\vec e_y,\vec v_h\right) &=& \left( \vec F^{n+1}, \vec v_h\right) - \left( R_u^{n+1}, \vec v_h\right),\label{eq-qmvt-weak-disc} \\ \vspace{0.2cm}
\left(\delta_t \theta^{n+1},\psi_h\right) + \frac{1}{Re Pr}\bar a(\theta^{n+1},\psi_h) + \bar c(\vec u^{n+1},\theta^{n+1},\psi_h)  &=& \left(g^{n+1},\psi_h\right) - \left(R_\theta^{n+1},\psi_h\right). \label{eq-energ-weak-disc} 
\end{eqnarray} 

In this article, we focus on solutions that exhibit mild regularity properties. This is expressed by the following assumptions.

\begin{assumption}\label{AssStabilitySolution}
$\vec u$ and $\theta$ are bounded in $H^1$ and $p$ is bounded in $L^2$, and we introduce $\mathcal N>0$ such that
\begin{equation}\label{eq-def-N}
\forall t\in[0;t_f], \qquad \lVert \vec u(t)\rVert_{H^1} + \nq{p(t)} + \lVert \theta(t)\rVert_{H^1} \leq \mathcal N.
\end{equation}
\end{assumption}

\begin{assumption}\label{AssRegularitySolution-2}
There exists $0\leq s_\theta\leq p_\theta$ such that $\vec{u} \in L^{\infty}(0,t_f; H^{2})$, $\partial_{t}\vec{u} \in L^{\infty}(0,t_{f}; H^{2})$, $\partial_{tt}\vec{u} \in L^{\infty}(0,t_{f}; L^{2})$, $p\in L^{\infty}(0,t_{f}; H^{1})$, $\partial_t p\in L^{\infty}(0,t_{f}; H^{1})$, $\theta\in L^{\infty}(0,t_{f}; H^{1+s_{\theta}})$, $\partial_{t}\theta\in L^{\infty}(0,t_{f}; H^{s_{\theta}})$, $\partial_{tt}\theta\in L^{\infty}(0,t_{f}; L^{2})$. We then introduce
\begin{align}
 M_2(\vec u,p,\theta)&:= \norm[L^{\infty}(L^2)]{\partial_{tt}\vec u}^2 + \norm[L^{\infty}(L^2)]{\partial_{tt}\theta}^2 + \norm[L^{\infty}(H^{1+s_\theta})]{\theta}^2 \\
	&+ \norm[L^{\infty}(H^{s_\theta})]{\partial_t \theta}^2 
	+\norm[L^\infty(H^2)]{\vec u}^2 + \norm[L^\infty(H^1)]{p}^2 \\
	&+ \norm[L^{\infty}(L^2)]{p}^2
	+\norm[L^\infty(H^2)]{\vec u}^4 + \norm[L^\infty(H^1)]{p}^4 \\
	&+ \norm[L^{\infty}(L^2)]{p}^4
	+\norm[L^{\infty}(H^2)]{\partial_t\vec u}^2 + \norm[L^{\infty}(H^1)]{\partial_t p}^2 + \norm[L^{\infty}(L^2)]{\partial_t p}^2\label{eq-notation-nuptheta-2}
\end{align}
\end{assumption}

Note that Assumption~\ref{AssStabilitySolution} is redundant with Assumption~\ref{AssRegularitySolution-2}, but we keep it in order to separate terms that are bounded by $\mathcal N$ from those bounded by $M_2$. Throughout the paper, we will denote by $C$ a positive constant that is independent of $h$ and $M_2$. Unless stated otherwise, $C$ might depend on $\mathcal N$, $\Omega$, $\sigma$ and/or the dimensionless parameters $Re$, $Pr$ and $Ra$. Its value may change at every line.

\begin{theorem}\label{thm_improved}
	Let $(\vec{u}, p, \theta)$ be the solution of \eqref{eq-div}-\eqref{eq-qmvt}-\eqref{eq-energ} such that Assumptions~\ref{AssStabilitySolution} and \ref{AssRegularitySolution-2} hold. Let $0<\sigma<1$ be a small real number. Then the following error estimates holds for any $n\leqslant N_f$:
	\begin{align}
&	\lVert \varphi_u^n \rVert_0^2 +\lVert  \varphi_\theta^n \rVert_0^2  \leq C \Big(\delta t^2\norm[L^{\infty}(L^2)]{\partial_{tt}\vec u}^2 + \delta t^2\norm[L^{\infty}(L^2)]{\partial_{tt}\theta}^2 + h^{2s_\theta}\norm[L^{\infty}(H^{1+s_\theta})]{\theta}^2 \nonumber \\
	&+ h^{2s_\theta}\norm[L^{\infty}(H^{s_\theta})]{\partial_t \theta}^2 
	+\frac{h^{4-2\sigma}}{\gamma_h^{2-\sigma}}\norm[L^\infty(H^2)]{\vec u}^2 + \frac{h^{4-\sigma}}{\gamma_h^{2-\sigma}}\norm[L^\infty(H^1)]{p}^2 \nonumber \\
	&+ \gamma_h^2\norm[L^{\infty}(L^2)]{p}^2
	+\frac{h^4}{\gamma_h^2}\norm[L^\infty(H^2)]{\vec u}^4 + \frac{h^{4}}{\gamma_h^{2}}\norm[L^\infty(H^1)]{p}^4 \nonumber \\
	&+ \gamma_h^4\norm[L^{\infty}(L^2)]{p}^4
	+\frac{h^4}{\gamma_h^2}\norm[L^{\infty}(H^2)]{\partial_t\vec u}^2 + \frac{h^{4}}{\gamma_h^2}\norm[L^{\infty}(H^1)]{\partial_t p}^2 + \gamma_h^2\norm[L^{\infty}(L^2)]{\partial_t p}^2\Big) \label{eq_estimation_improved}
	\end{align}
	\end{theorem}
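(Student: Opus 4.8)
The plan is to run an energy argument directly on the discrete error components $\varphi_u^{n}$, $\varphi_p^{n}$ and $\varphi_\theta^{n}$, using the two projections to discard the leading linear contributions and pushing all the approximation error onto the projection parts $\eta_u^{n}$, $\eta_p^{n}$, $\eta_\theta^{n}$ and the time-consistency residuals $R_u^{n}$, $R_\theta^{n}$. First I would subtract the discrete scheme \eqref{eq-div-discrete}--\eqref{eq-energ-discrete} from the consistency relations \eqref{eq-div-weak-disc}--\eqref{eq-energ-weak-disc} and insert the splittings $\vec e_u=\eta_u-\varphi_u$, $e_p=\eta_p-\varphi_p$, $e_\theta=\eta_\theta-\varphi_\theta$. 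Testing the momentum error equation with $\vec v_h=\varphi_u^{n+1}$, the divergence error equation with $q_h=\varphi_p^{n+1}$, and combining them through the defining identity of $\polP_{\gamma_h,0}^{X_h,Q_h}$, the whole Stokes block in $\eta_u,\eta_p$ and the coupling $b(\varphi_u^{n+1},\varphi_p^{n+1})$ cancel, leaving only the coercive term $\tfrac1{Re}a(\varphi_u^{n+1},\varphi_u^{n+1})$ together with a nonnegative penalty contribution $\gamma_h\nq{\varphi_p^{n+1}}^2$ on the dissipative side. For the temperature, testing the energy error equation with $\psi_h=\varphi_\theta^{n+1}$ and using the Ritz property \eqref{eq:ClementT} kills $\bar a(\eta_\theta^{n+1},\varphi_\theta^{n+1})$, so only $\tfrac1{Re Pr}\bar a(\varphi_\theta^{n+1},\varphi_\theta^{n+1})$ survives on the left.

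After this reduction the two identities read, schematically, $(\delta_t\varphi_u^{n+1},\varphi_u^{n+1})+\tfrac1{Re}a(\varphi_u^{n+1},\varphi_u^{n+1})+\gamma_h\nq{\varphi_p^{n+1}}^2 = (\delta_t\eta_u^{n+1},\varphi_u^{n+1})+\Delta c+(R_u^{n+1},\varphi_u^{n+1})-(f_B(e_\theta^{n+1})\vec e_y,\varphi_u^{n+1})$ together with its temperature analogue, where $\Delta c$ denotes the convective mismatch. I would then add the velocity and temperature identities and treat them \emph{simultaneously}, so that the buoyancy coupling $(f_B(\varphi_\theta^{n+1})\vec e_y,\varphi_u^{n+1})$ and the convective cross term feeding $\varphi_u^{n+1}$ into the energy balance are absorbed by the combined dissipation and the combined $L^2$ norm. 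The time-difference terms are handled with the identity $(\delta_t\varphi^{n+1},\varphi^{n+1})\geq\tfrac1{2\delta t}\bigl(\nq{\varphi^{n+1}}^2-\nq{\varphi^{n}}^2\bigr)$, the residuals are bounded by $\nq{R_u^{n+1}}\leq C\delta t\,\norm[L^\infty(L^2)]{\partial_{tt}\vec u}$ (and likewise for $R_\theta$) via Taylor expansion, and $(\delta_t\eta_u^{n+1},\varphi_u^{n+1})$ is controlled after recognizing $\delta_t\eta_u$ as a time increment of the Stokes projection error.

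The crux is the nonlinear mismatch $\Delta c=c(\vec u^{n+1},\vec u^{n+1},\varphi_u^{n+1})-c(\vec u_h^{n+1},\vec u_h^{n+1},\varphi_u^{n+1})$ and its temperature counterpart. Writing $\vec u_h=\vec u-\vec e_u$ and expanding by bilinearity, then substituting $\vec e_u=\eta_u-\varphi_u$ and discarding the diagonal terms $c(\cdot,\varphi_u^{n+1},\varphi_u^{n+1})=0$ by skew-symmetry, leaves the genuine contributions $c(\vec u^{n+1},\eta_u^{n+1},\varphi_u^{n+1})$, $c(\eta_u^{n+1},\vec u^{n+1},\varphi_u^{n+1})$, $c(\varphi_u^{n+1},\vec u^{n+1},\varphi_u^{n+1})$, $c(\eta_u^{n+1},\eta_u^{n+1},\varphi_u^{n+1})$ and $c(\varphi_u^{n+1},\eta_u^{n+1},\varphi_u^{n+1})$. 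The hard part will be estimating the terms carrying the projection error with \emph{optimal} rates: invoking Proposition~\ref{prop:continuity_c_improved} through \eqref{eq-c-uHsigma}--\eqref{eq-c-vHsigma} rather than the cruder \eqref{eq-c-continu}, I pay only $\norm[H^\sigma]{\eta_u^{n+1}}$, which by the Stokes-projection estimates of \S\ref{sec-stokesproj} scales so that, once squared and summed, it reproduces exactly the $h^{4-2\sigma}/\gamma_h^{2-\sigma}\norm[L^\infty(H^2)]{\vec u}^2$, $h^{4-\sigma}/\gamma_h^{2-\sigma}\norm[L^\infty(H^1)]{p}^2$ and $\gamma_h^2\norm[L^\infty(L^2)]{p}^2$ contributions of \eqref{eq_estimation_improved}. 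The quartic terms arise from products of two projection-error factors (as in $c(\eta_u^{n+1},\eta_u^{n+1},\varphi_u^{n+1})$) and from the $\mathcal N$-dependent Gronwall coefficients generated by $c(\varphi_u^{n+1},\vec u^{n+1},\varphi_u^{n+1})$, where the interpolation $\norm[H^\sigma]{\varphi_u}\leq\nq{\varphi_u}^{1-\sigma}\nx{\varphi_u}^{\sigma}$ followed by Young trades a full $H^1$ power for an absorbable term plus a $\nq{\varphi_u}^2$ contribution; the real difficulty is that each such term admits several admissible splittings, and only one combination balances the powers of $h$ and $\gamma_h$ consistently across velocity and temperature.

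Finally I would collect everything into a recursion of the form $\nq{\varphi_u^{n+1}}^2+\nq{\varphi_\theta^{n+1}}^2-\nq{\varphi_u^{n}}^2-\nq{\varphi_\theta^{n}}^2+c_0\,\delta t\bigl(\nx{\varphi_u^{n+1}}^2+\nx{\varphi_\theta^{n+1}}^2\bigr)\leq C\,\delta t\bigl(\nq{\varphi_u^{n+1}}^2+\nq{\varphi_\theta^{n+1}}^2\bigr)+\delta t\,\mathcal E^{n+1}$, where $\mathcal E^{n+1}$ gathers the $\eta$-, $R$- and data-dependent sources. The temperature sources follow from the Clément estimate \eqref{eq:ClementP}, giving $h^{2s_\theta}\norm[L^\infty(H^{1+s_\theta})]{\theta}^2$ and $h^{2s_\theta}\norm[L^\infty(H^{s_\theta})]{\partial_t\theta}^2$, while the remaining velocity and pressure sources come from \S\ref{sec-stokesproj}. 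Summing over $n\leq N_f$, using $\sum\delta t\sim t_f$ to pass to $L^\infty$-in-time norms, and closing with the discrete Gronwall lemma (legitimate once $\delta t$ is small enough that the factor $C\delta t$ in front of $\nq{\varphi_u^{n+1}}^2+\nq{\varphi_\theta^{n+1}}^2$ is absorbed) yields \eqref{eq_estimation_improved}, with $C$ independent of $h$ and $M_2$.
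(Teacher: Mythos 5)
Your proposal follows essentially the same route as the paper's proof: an energy argument on $\varphi_u^{n+1},\varphi_p^{n+1},\varphi_\theta^{n+1}$ in which the defining identity of $\polP_{\gamma_h,0}^{X_h,Q_h}$ cancels the whole Stokes block, the convective mismatch is expanded into the same five terms with the diagonal ones killed by skew-symmetry, the $\eta$-carrying terms are estimated via Proposition~\ref{prop:continuity_c_improved} and the Stokes-projection bounds of \S\ref{sec-stokesproj}, and the discrete Gronwall lemma closes the recursion. The only deviation is cosmetic: you handle $c(\varphi_u^{n+1},\vec u^{n+1},\varphi_u^{n+1})$ by $H^\sigma$-interpolation of $\varphi_u$, whereas the paper uses the $L^2$--$H^1$ interpolation built into \eqref{eq-c-continu} together with the Young inequality $xy\leq \frac{x^4}{4}+\frac{3y^{4/3}}{4}$; both yield an absorbable $\nx{\varphi_u^{n+1}}^2$ plus an $\mathcal N$-dependent multiple of $\nq{\varphi_u^{n+1}}^2$.
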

	
The proof of this theorem is postponed to Section \ref{sec-proof}. To get a convergence result, we need the following corollary to this result. 
	
\begin{corollary}\label{cor:improved}
Under the assumptions of Theorem \ref{thm_improved}, the following error estimate holds, for $h^2<\gamma_h<1$:
\begin{equation}\label{eq-improved}
\lVert \vec e_u^n \rVert_0^2 +\lVert  e_\theta^n \rVert_0^2 \leq C\left( \delta t^2 + h^{2s_\theta} + \frac{h^{4-2\sigma}}{\gamma_h^{2-\sigma}} + \gamma_h^2 \right)  M_2(\vec u,p,\theta).
\end{equation}
\end{corollary}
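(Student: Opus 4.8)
The plan is to pass from the estimate on the discrete part of the error $(\varphi_u^n,\varphi_\theta^n)$ provided by Theorem~\ref{thm_improved} to the full error $(\vec e_u^n, e_\theta^n)$. First I would invoke the splittings $\vec e_u^n = \eta_u^n - \varphi_u^n$ and $e_\theta^n = \eta_\theta^n - \varphi_\theta^n$ introduced before the statement, and apply the triangle inequality together with $(a+b)^2 \le 2a^2 + 2b^2$ to get $\lVert \vec e_u^n\rVert_0^2 + \lVert e_\theta^n\rVert_0^2 \le 2(\lVert \varphi_u^n\rVert_0^2 + \lVert \varphi_\theta^n\rVert_0^2) + 2(\lVert \eta_u^n\rVert_0^2 + \lVert \eta_\theta^n\rVert_0^2)$. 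The first group is controlled directly by \eqref{eq_estimation_improved}; the second is a pure projection error, to be handled by the results of Section~\ref{sec-stokesproj} and by \eqref{eq:ClementP}. It then remains to check that both groups are bounded by the compact right-hand side of \eqref{eq-improved}.

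The core of the argument is the simplification of the thirteen-term bound \eqref{eq_estimation_improved} under the hypothesis $h^2 < \gamma_h < 1$ (and $h<1$). Each summand factors as a power of $h,\gamma_h,\delta t$ times one of the quantities collected in $M_2(\vec u,p,\theta)$, so it suffices to dominate every coefficient by the maximum of the four coefficients $\delta t^2$, $h^{2s_\theta}$, $h^{4-2\sigma}/\gamma_h^{2-\sigma}$ and $\gamma_h^2$ retained in \eqref{eq-improved}. The nontrivial dominations I would record are: $h^{4-\sigma}/\gamma_h^{2-\sigma} = h^\sigma \cdot h^{4-2\sigma}/\gamma_h^{2-\sigma} \le h^{4-2\sigma}/\gamma_h^{2-\sigma}$ since $h<1$; next $h^4/\gamma_h^2 = (h^2/\gamma_h)^\sigma \cdot h^{4-2\sigma}/\gamma_h^{2-\sigma} \le h^{4-2\sigma}/\gamma_h^{2-\sigma}$ since $\gamma_h > h^2$; and $\gamma_h^4 \le \gamma_h^2$ since $\gamma_h < 1$. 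The quartic norm factors $\lVert\vec u\rVert_{H^2}^4$, $\lVert p\rVert_{H^1}^4$ and $\lVert p\rVert_{L^2}^4$ are themselves summands of $M_2$, so the three terms carrying them are absorbed by the very same bounds. Gathering these yields $\lVert \varphi_u^n\rVert_0^2 + \lVert \varphi_\theta^n\rVert_0^2 \le C\bigl(\delta t^2 + h^{2s_\theta} + h^{4-2\sigma}/\gamma_h^{2-\sigma} + \gamma_h^2\bigr) M_2$.

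For the projection errors, $\eta_\theta^n = \theta^n - \mathcal C_h\theta^n$ is estimated by \eqref{eq:ClementP} with $s=0$ and $r = 1+s_\theta$, giving $\lVert\eta_\theta^n\rVert_0^2 \le C h^{2+2s_\theta}\lVert\theta^n\rVert_{H^{1+s_\theta}}^2 \le C h^{2s_\theta} M_2$ (using $h<1$), hence absorbed. The velocity projection error $\eta_u^n = \vec u^n - \vec{\tilde u}_h^n$ is the $L^2$ error of the penalized Stokes projection $\polP_{\gamma_h,0}^{X_h,Q_h}$; here I would invoke the approximation estimates established for this operator in Section~\ref{sec-stokesproj}, which bound $\lVert \eta_u^n\rVert_0^2$ (and the accompanying pressure contribution) by terms of the same form as those already retained, so that it too is dominated by the right-hand side of \eqref{eq-improved}.

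The main obstacle is not any single inequality but the systematic bookkeeping of the second paragraph: one must verify that every one of the thirteen terms of \eqref{eq_estimation_improved}, in particular the mixed-exponent terms $h^{4-\sigma}/\gamma_h^{2-\sigma}$ and $h^4/\gamma_h^2$ and the three quartic terms, is genuinely controlled by the four retained coefficients. This is precisely where the two-sided constraint $h^2 < \gamma_h < 1$ enters in an essential way, the lower bound $\gamma_h > h^2$ being needed to tame $h^4/\gamma_h^2$ and the upper bound $\gamma_h < 1$ to tame $\gamma_h^4$. A secondary point to keep straight is that the constant $C$ may depend on $\mathcal N$, $\Omega$, $\sigma$, $Re$, $Pr$ and $Ra$ but not on $h$, $\gamma_h$ or $M_2$, so the repeated use of $h<1$ and $\gamma_h<1$ to enlarge $C$ is legitimate.
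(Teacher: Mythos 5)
Your proposal is correct and follows essentially the same route as the paper: split the error via the triangle inequality, control $(\varphi_u^n,\varphi_\theta^n)$ by Theorem~\ref{thm_improved} and absorb all thirteen coefficients into the four retained ones using $h^2<\gamma_h<1$, and control $(\eta_u^n,\eta_\theta^n)$ by \eqref{eq:ClementP} and the $L^2$ estimate \eqref{Norm_proj_L2_final} for the modified Stokes projection. Your explicit dominations (e.g. $h^4/\gamma_h^2=(h^2/\gamma_h)^\sigma\,h^{4-2\sigma}/\gamma_h^{2-\sigma}$) are exactly the bookkeeping the paper leaves implicit.
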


\begin{proof}
Using the triangular inequality, we obtain
$$
\lVert \vec e_u^n \rVert_0^2 +\lVert  e_\theta^n \rVert_0^2 \leq 2\lVert \varphi_u^n \rVert_0^2 +2\lVert  \varphi_\theta^n \rVert_0^2 + 2\lVert \eta_u^n \rVert_0^2 +2\lVert  \eta_\theta^n \rVert_0^2.
$$
The first two terms correspond to the left hand side of \eqref{eq_estimation_improved}, so that
$$
\lVert \varphi_u^n \rVert_0^2 +\lVert  \varphi_\theta^n \rVert_0^2 \leq \left( \delta t^2 + h^{2s_\theta} + \frac{h^{4-2\sigma}}{\gamma_h^{2-\sigma}} + \gamma_h^2 \right)  M_2(\vec u,p,\theta).
$$

For the last two terms, we use the properties of $\mathcal C_h$ and $\tilde{\vec u}_h$, see Eqns. \eqref{eq:ClementP} and \eqref{Norm_proj_L2_final}
\begin{align*}
\lVert  \eta_u^n \rVert_0^2 = \lVert  \vec u^n - \tilde {\vec u}_h^n \rVert_0^2 &\leq C\left( \frac{h^4}{\gamma_h^2} \norm[H^2]{\vec u^n}^2 + \frac{h^4}{\gamma_h}\norm[H^1]{p^n}^2 + \gamma_h^2\norm[L^2]{p^n}^2 \right) \\
&\leq C\left( \frac{h^4}{\gamma_h^2} + \frac{h^4}{\gamma_h} + \gamma_h^2\right) M_2(\vec u,p,\theta),
 \\
\lVert  \eta_\theta^n \rVert_0^2 = \lVert  \theta^n - \mathcal C_h \theta^n \rVert_0^2 &\leq C h^{2+2s_\theta}  \lVert  \theta^n\rVert_{H^{1+s_\theta}}^2 \leq C h^{2+2p_\theta} M_2(\vec u,p,\theta).
\end{align*}
Owing to the assumptions on $\gamma_h$, the latter right hand sides are bounded by the right hand side of \eqref{eq-improved}. 
\end{proof}

\begin{remark}
The convergence rate is not limited by $s_\theta$, therefore we can set $s_\theta=1$ (e.g. $p_\theta=1$ means that we also use $P_1$ elements for $\theta$). Then the estimate becomes
$$
\lVert \vec e_u^n \rVert_0^2 +\lVert  e_\theta^n \rVert_0^2 \leq C\left( \delta t^2 + h^{2} + \frac{h^{4-2\sigma}}{\gamma_h^{2-\sigma}} + \gamma_h^2 \right)  M_2(\vec u,p,\theta).
$$
If we choose $\gamma_h = h$, then we obtain
$$
\lVert \vec e_u^n \rVert_0^2 +\lVert  e_\theta^n \rVert_0^2 \leq C\left( \delta t^2 + h^{2-\sigma} \right)  M_2(\vec u,p,\theta)
$$
and we get an almost first order accuracy in $h$, supported by our numerical results, see Section~\ref{sec-numerics}.
\end{remark}

\begin{remark}
In Theorem~\ref{thm_improved} and its corollary, the constant $C$ depends on $\sigma$ and goes to infinity as $\sigma\to 0$. Hence, we do not recover the maximal order of convergence, but we can be as close as desired.
\end{remark}

\section{Modified Stokes projection}\label{sec-stokesproj}

It is possible to establish a convergence result, similar to Theorem~\ref{thm_improved}, by using $\mathcal C_h \vec{u}$ and $\mathcal C_h p$ instead of $\tilde{\vec u}_h$ and $\tilde p_h$. However, this leads to a predicted convergence order of $1/2$ in space, which is clearly not optimal. The purpose of this section is to design a modified Stokes projection that will help to improve the convergence rate. We call it \emph{modified Stokes projection} since, strictly speaking, it is not a projection.

\subsection{Definition of a family of operators}

\begin{definition}
For any $0\leq \lambda < 1$, we define the bilinear form $a_\lambda$ on $\vec X\times L^2$ by
$$
a_\lambda\big((\vec u, p),(\vec v,q)\big):= \iRe a(\vec u,\vec v) + b(\vec v,p) - b(\vec u,q) + \lambda(p,q).
$$
\end{definition}
We introduce a norm adapted to this bilinear form, viz.
$$
\norm[\lambda]{\vec u,p}:=\frac{1}{\sqrt{Re}}\nx{\vec u} + \sqrt{\lambda}\nq{p}.
$$
In the following, we will always assume that $0<\lambda<1$ and that $Re>1$.

\begin{proposition}
$a_\lambda$ is coercive and continuous with respect to the norm $\norm[\lambda]{\cdot}$. More precisely, there exist $C,C'$ independent of $\lambda$ and $Re$ such that, for any $(\vec u, p)\in \vec X\times L^2$ and $(\vec v, q)\in \vec X\times L^2$,
\begin{align}
\label{eq-alambda-coercive}
C' \norm[\lambda]{\vec u,p}^2 &\leq a_\lambda\big((\vec u, p),(\vec u,p)\big), \\
\label{eq-alambda-continuous}
\left| a_\lambda\big((\vec u, p),(\vec v,q)\big) \right| &\leq \frac{C\sqrt{Re}}{\sqrt\lambda}\norm[\lambda]{\vec u,p}\norm[\lambda]{\vec v,q}.
\end{align}
\end{proposition}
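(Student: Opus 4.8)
The plan is to establish both bounds by direct computation, relying on the coercivity and continuity estimates already recorded for $a$ and $b$ together with the particular algebraic structure of $a_\lambda$. The decisive observation for coercivity is that the two off-diagonal terms of $a_\lambda$ cancel on the diagonal: setting $(\vec v, q) = (\vec u, p)$ gives $b(\vec u, p) - b(\vec u, p) = 0$, hence
$$
a_\lambda\big((\vec u, p),(\vec u, p)\big) = \iRe a(\vec u,\vec u) + \lambda \nq{p}^2.
$$
Using the coercivity of $a$, namely $a(\vec u,\vec u) \geq \alpha \nx{\vec u}^2$, the right-hand side is bounded below by $\frac{\alpha}{Re}\nx{\vec u}^2 + \lambda\nq{p}^2$.

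To conclude the coercivity estimate, I would compare this quantity with $\norm[\lambda]{\vec u, p}^2$. The elementary inequality $(x+y)^2 \leq 2(x^2 + y^2)$ yields $\norm[\lambda]{\vec u, p}^2 \leq 2\big(\iRe \nx{\vec u}^2 + \lambda \nq{p}^2\big)$, so that
$$
a_\lambda\big((\vec u, p),(\vec u, p)\big) \geq \min(\alpha,1)\Big(\iRe\nx{\vec u}^2 + \lambda\nq{p}^2\Big) \geq \frac{\min(\alpha,1)}{2}\norm[\lambda]{\vec u, p}^2.
$$
This gives \eqref{eq-alambda-coercive} with $C' = \tfrac12\min(\alpha,1)$, manifestly independent of $\lambda$ and $Re$.

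For continuity, I would bound each of the four terms of $a_\lambda$ separately, converting every factor into the $\lambda$-norm through the two elementary estimates $\nx{\vec u} \leq \sqrt{Re}\,\norm[\lambda]{\vec u, p}$ and $\nq{p} \leq \tfrac{1}{\sqrt\lambda}\norm[\lambda]{\vec u, p}$, which follow at once from the definition of $\norm[\lambda]{\cdot}$ as a sum of nonnegative terms. The diagonal term is handled by $\iRe|a(\vec u,\vec v)| \leq \frac{A}{Re}\nx{\vec u}\nx{\vec v} = A\big(\tfrac{1}{\sqrt{Re}}\nx{\vec u}\big)\big(\tfrac{1}{\sqrt{Re}}\nx{\vec v}\big) \leq A\,\norm[\lambda]{\vec u, p}\norm[\lambda]{\vec v, q}$. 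Each off-diagonal term, via $|b(\vec v, p)| \leq \nx{\vec v}\nq{p}$, pairs one $H^1$ factor (contributing $\sqrt{Re}$) with one $L^2$ factor (contributing $\tfrac{1}{\sqrt\lambda}$), so the two together contribute at most $\frac{2\sqrt{Re}}{\sqrt\lambda}\norm[\lambda]{\vec u, p}\norm[\lambda]{\vec v, q}$. Finally the penalty term satisfies $\lambda|(p,q)| \leq \sqrt\lambda\nq{p}\cdot\sqrt\lambda\nq{q} \leq \norm[\lambda]{\vec u, p}\norm[\lambda]{\vec v, q}$.

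Summing the four contributions gives a prefactor $A + 1 + \frac{2\sqrt{Re}}{\sqrt\lambda}$. Since the standing hypotheses $Re > 1$ and $\lambda < 1$ force $\sqrt{Re}/\sqrt\lambda > 1$, the constant terms $A+1$ are dominated by $\frac{\sqrt{Re}}{\sqrt\lambda}$, and the whole prefactor can be absorbed into $\frac{C\sqrt{Re}}{\sqrt\lambda}$ with $C$ depending only on $A$ (hence only on $\Omega$), yielding \eqref{eq-alambda-continuous}. There is no serious obstacle here; the only point requiring care is the continuity estimate, where one must track the powers of $Re$ and $\lambda$ precisely so as to land on the asserted constant $\frac{C\sqrt{Re}}{\sqrt\lambda}$ rather than a worse power of $\lambda$ — the degradation as $\lambda\to 0$ being exactly what one expects from the loss of the inf-sup control that the penalty term is designed to replace.
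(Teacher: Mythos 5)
Your proposal is correct and follows exactly the route the paper indicates (the paper only sketches this proof in one sentence: coercivity from the cancellation of the $b$-terms on the diagonal plus the coercivity of $a$, continuity from the continuity of $a$ and $b$ together with Cauchy--Schwarz and the assumptions $Re>1$, $\lambda<1$). Your write-up simply supplies the details the paper omits, with the constants tracked correctly.
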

The coercivity of $a_\lambda$ directly derives from that of the bilinear form $a$, while the continuity is obtained from the continuity of $a$ and $b$ and Cauchy-Schwarz inequalities. Using this bilinear form, we define a family of operators in the following way:
\begin{definition}
Let $\mathcal X$ be a closed subset of $\vec X$ and $\mathcal Q$ be a closed subset of $L^2$. For any $0< \lambda < 1$ and $0\leq \gamma <1$, we define the operator $\polP_{\lambda,\gamma}^{\mathcal X,\mathcal Q}:\vec X\times L^2\to \mathcal X\times \mathcal Q$ in a weak form. For any $(\vec u,p)\in \vec X\times L^2$, $\polP_{\lambda,\gamma}^{\mathcal X,\mathcal Q}(\vec u,p) = (\vec{\tilde u},\tilde p)\in\mathcal X\times \mathcal Q$ is such that:
\begin{equation}\label{Definition_projection}
\forall (\vec v,q)\in\mathcal X\times \mathcal Q,\qquad a_\lambda\big((\vec{\tilde u}, \tilde p),(\vec v,q)\big) = a_\gamma\big((\vec u, p),(\vec v,q)\big).
\end{equation}
\end{definition}
Owing to the coercivity and continuity of $a_\lambda$, the operators $ \polP_{\lambda,\gamma}^{\mathcal X,\mathcal Q} $ are well-defined. We want to use the operator $\polP_{\gamma_h,0}^{X_h,Q_h}$ to establish convergence results for the natural convection problem. Note that, owing to \eqref{Definition_projection}, for $(\vec u,p)\in\vec X\times L^2$, if we define $(\vec u_h,p_h) = \polP_{\gamma_h,0}^{X_h,Q_h}(\vec u,p)$ and $(\vec{\tilde u},\tilde p) = \polP_{\gamma_h,0}^{\vec X,L^2}(\vec u,p)$, then we have
\begin{equation}
(\vec u_h,p_h) = \polP_{\gamma_h,\gamma_h}^{X_h,Q_h}(\vec{\tilde u},\tilde p).
\end{equation}

The modified Stokes projection is defined as $\polP_{\gamma_h,0}^{X_h,Q_h}$. Hence, its properties can be established through the properties of $\polP_{\gamma_h,\gamma_h}^{X_h,Q_h}$ and $\polP_{\gamma_h,0}^{\vec X,L^2}$. In order to study these two operators, we will repeatedly use Theorem 1.3 from \cite{Beirao97} which can be presented as

\begin{theorem}\label{thm_beirao}
Let $\vec f\in L^2$ and $g\in H^1\cap Q$. Let $(\vec u,p)\in\vec X\times Q$ such that
\begin{equation}
\begin{cases}
-\mu \Delta \vec u + \nabla p &= \vec f, \\
\nabla\cdot \vec u + \lambda p &= g,
\end{cases}
\end{equation}
where $\mu,\lambda$ are positive constants. Then $\vec u\in H^2$ and $p\in H^1$, and there exists a constant $C$ that depends only on $\Omega$ such that
\begin{equation}\label{eq-beirao}
\mu\lVert u\rVert_{H^2} + (1+\mu\lambda)\lVert p\rVert_{H^1} \leq C\left( \lVert \vec f\rVert_{L^2} + \mu\lVert g\rVert_{H^1}\right).
\end{equation}
\end{theorem}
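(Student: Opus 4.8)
The plan is to treat the system as a lower-order (zeroth-order in $p$) perturbation of the classical Stokes problem and to read off the sharp dependence on $\mu$ and $\lambda$ from an auxiliary scalar equation for the pressure. Existence and uniqueness of $(\vec u,p)\in\vec X\times Q$ are not the issue: since $\lambda>0$, the bilinear form attached to the system is exactly of the form $a_\lambda$ (with $1/Re$ replaced by $\mu$), which is coercive by \eqref{eq-alambda-coercive}, so the weak solution is furnished by Lax--Milgram. Testing the momentum equation against $\vec u$ and the mass equation against $p$ gives at once the base energy bound $\mu\lVert\nabla\vec u\rVert_{L^2}^2+\lambda\lVert p\rVert_{L^2}^2\leq(\vec f,\vec u)+(g,p)$. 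Everything below is devoted to upgrading this weak solution to $H^2\times H^1$ with constants independent of $\mu,\lambda$.

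The key structural observation is that $p$ itself solves a Poisson problem in which the factor $(1+\mu\lambda)$ appears explicitly. Taking the divergence of the momentum equation gives $-\mu\Delta(\nabla\cdot\vec u)+\Delta p=\nabla\cdot\vec f$, and substituting $\nabla\cdot\vec u=g-\lambda p$ from the mass equation yields
\[
(1+\mu\lambda)\,\Delta p=\nabla\cdot\vec f+\mu\,\Delta g,
\]
complemented by the Neumann condition $\partial_n p=\vec f\cdot\vec n+\mu(\Delta\vec u)\cdot\vec n$ on $\partial\Omega$, obtained from the normal trace of the momentum equation. Since $p\in Q$ has zero mean, the Neumann Laplacian is boundedly invertible onto the zero-mean subspace, and the right-hand side $\nabla\cdot\vec f+\mu\Delta g$ has $H^{-1}$ norm controlled by $\lVert\vec f\rVert_{L^2}+\mu\lVert g\rVert_{H^1}$. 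Solving for the rescaled unknown $(1+\mu\lambda)p$ then delivers precisely $(1+\mu\lambda)\lVert p\rVert_{H^1}\leq C(\lVert\vec f\rVert_{L^2}+\mu\lVert g\rVert_{H^1})$, i.e.\ the pressure half of \eqref{eq-beirao} with the correct factor.

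Once $\lVert\nabla p\rVert_{L^2}$ is controlled (a fortiori, since $(1+\mu\lambda)\lVert p\rVert_{H^1}$ is bounded), the momentum equation becomes the vector Poisson problem $-\mu\Delta\vec u=\vec f-\nabla p$ with homogeneous Dirichlet data. Standard $H^2$ elliptic regularity on the $C^{1,1}$ domain $\Omega$ gives $\mu\lVert\vec u\rVert_{H^2}\leq C(\lVert\vec f\rVert_{L^2}+\lVert\nabla p\rVert_{L^2})\leq C(\lVert\vec f\rVert_{L^2}+\mu\lVert g\rVert_{H^1})$, and adding the two bounds yields \eqref{eq-beirao}.

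The hard part is making the pressure equation rigorous at the regularity level actually available a priori, where only $\vec u\in H^1$ and $p\in L^2$ are known: $\nabla\cdot\vec f$ lives merely in $H^{-1}$, and, more seriously, the Neumann datum $(\Delta\vec u)\cdot\vec n$ is not yet a legitimate trace. The clean route is to interpret the Poisson problem for $p$ weakly and to rewrite $(\Delta\vec u)\cdot\vec n|_{\partial\Omega}$ in terms of $\partial_n(\nabla\cdot\vec u)=\partial_n(g-\lambda p)$ together with tangential and curvature contributions coming from $\vec u|_{\partial\Omega}=0$ — this is exactly where the $C^{1,1}$ hypothesis (bounded curvature of $\partial\Omega$) enters — and then to close by a difference-quotient or boundary-flattening bootstrap, checking at each step that the boundary datum carries the same $(1+\mu\lambda)$ weight as the interior term so that no constant depends on $\mu$ or $\lambda$. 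An alternative that avoids isolating the pressure is to view the whole system as an Agmon--Douglis--Nirenberg elliptic system: the zeroth-order term $\lambda p$ affects neither the principal symbol nor the complementing (Dirichlet) boundary condition, so $H^2\times H^1$ regularity follows from ADN theory, and the sharp parameter dependence is recovered by a rescaling argument. Either way, the uniform-in-parameter bookkeeping near the boundary is the genuine crux; the interior estimate is routine.
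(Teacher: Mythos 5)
First, a point of reference: the paper does not prove this statement at all --- it is imported verbatim as ``Theorem 1.3 from \cite{Beirao97}'' and used as a black box. So there is no in-paper proof to compare against; your attempt has to stand on its own.

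On its own, the attempt has the right skeleton but a genuine gap exactly where you yourself locate ``the genuine crux''. The reductions you do carry out are correct: the energy identity, the interior pressure equation $(1+\mu\lambda)\Delta p=\nabla\cdot\vec f+\mu\Delta g$ obtained by taking the divergence and substituting $\nabla\cdot\vec u=g-\lambda p$, and the final step (once $\nabla p\in L^2$ is controlled, $-\mu\Delta\vec u=\vec f-\nabla p$ with homogeneous Dirichlet data gives $\mu\lVert\vec u\rVert_{H^2}$ by scalar elliptic regularity on a $C^{1,1}$ domain). The problem is that the pressure estimate --- the only genuinely hard part, and the source of the $(1+\mu\lambda)$ weight --- is never established. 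Your Neumann datum is $\partial_n p=\vec f\cdot\vec n+\mu(\Delta\vec u)\cdot\vec n$; using $\Delta\vec u=\nabla(\nabla\cdot\vec u)-\nabla^{\perp}\omega$ and the mass equation this becomes $(1+\mu\lambda)\partial_n p=\vec f\cdot\vec n+\mu\partial_n g-\mu\partial_\tau\omega$, which indeed carries the right weight, but it contains the tangential derivative of the vorticity on $\partial\Omega$. A priori $\omega=\nabla\times\vec u$ is only in $L^2(\Omega)$, so this trace is meaningless unless $\vec u\in H^2$ --- which is what you are trying to prove. The argument is therefore circular as written: $p\in H^1$ needs $\vec u\in H^2$ (through the boundary datum) and $\vec u\in H^2$ needs $p\in H^1$. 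Note also that the naive alternative of treating $\lambda p$ as a perturbation of the Stokes problem gives $\lVert p\rVert_{H^1}\leq C(\lVert\vec f\rVert_{L^2}+\mu\lVert g\rVert_{H^1}+\mu\lambda\lVert p\rVert_{H^1})$, which does not close for large $\mu\lambda$; this is precisely why the uniform-in-$\lambda$ statement is nontrivial and cannot be waved through.

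The two escape routes you name are not executed and neither is routine. The ``difference-quotient / boundary-flattening bootstrap'' is the actual content of the theorem: one must localize, flatten, estimate tangential second derivatives of $\vec u$ and tangential first derivatives of $p$ uniformly in $\lambda$, and only then recover the normal derivatives algebraically from the two equations --- the claim that ``the boundary datum carries the same $(1+\mu\lambda)$ weight at each step'' is an assertion, not a proof. The ADN alternative is also too glib: Agmon--Douglis--Nirenberg gives $H^2\times H^1$ regularity for the fixed system, but the constants a priori depend on the lower-order coefficient $\lambda$, and no spatial rescaling removes the parameter $\mu\lambda$ from the system $-\Delta\vec u+\nabla q=\vec f/\mu$, $\nabla\cdot\vec u+\mu\lambda q=g$; uniformity in $\mu\lambda\in(0,\infty)$ requires the specific divergence structure you identified, carried all the way through the boundary estimates. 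In short: correct strategy, correct identification of where the weight $(1+\mu\lambda)$ comes from, but the proof of the pressure bound --- the part that makes the theorem worth citing --- is missing.
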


\subsection{Analysis of the operators}

\subsubsection{The penalty operator $\polP_{\gamma_h,0}^{\vec X,L^2}$}
In this subsection, we establish convergence estimates for different norms.
\begin{proposition}\label{thm_proj_continu_first}
Let $\vec u\in\vec X$ and $p\in Q$. Let us introduce $(\vec{\tilde u},\tilde p) = \polP_{\gamma_h,0}^{\vec X,L^2}(\vec u,p)$. Then there exists a constant $C$ independent of $\gamma_h$ and $Re$ such that
\begin{equation}\label{eq-error-continuous-op-normeh}
\frac{1}{\sqrt{Re}}\nx{\vec u-\vec{\tilde u}} + \sqrt{Re}\nq{p-\tilde p} \leq \frac{C\gamma_h}{\sqrt{Re}}\nq{p}.
\end{equation}
\end{proposition}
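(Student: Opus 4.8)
The plan is to turn the defining relation \eqref{Definition_projection} into an error equation and then to combine a standard energy estimate with the inf-sup condition \eqref{eq:LBB_condition}; the latter is what upgrades the naively expected $O(\sqrt{\gamma_h})$ rate into the sharp $O(\gamma_h)$ rate of \eqref{eq-error-continuous-op-normeh}. First I would set $\vec e_u = \vec u - \vec{\tilde u}$ and $e_p = p - \tilde p$ and observe that, since $a_{\gamma_h}\big((\vec u,p),(\vec v,q)\big) = a_0\big((\vec u,p),(\vec v,q)\big) + \gamma_h(p,q)$, the definition of $(\vec{\tilde u},\tilde p)=\polP_{\gamma_h,0}^{\vec X,L^2}(\vec u,p)$ yields the error equation
\[
a_{\gamma_h}\big((\vec e_u, e_p),(\vec v, q)\big) = \gamma_h\,(p,q), \qquad \forall (\vec v, q)\in \vec X\times L^2.
\]
Testing this with $(\vec v,q)=(0,1)$ and using that $\vec e_u\in\vec X$ vanishes on $\partial\Omega$ (so $b(\vec e_u,1)=0$ by the divergence theorem) together with $p\in Q$, one finds $\int_\Omega e_p=0$, i.e. $e_p\in Q$, so that the inf-sup condition becomes applicable to the pressure error.

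Next I would exploit the error equation in two complementary ways. Choosing $q=0$ gives $\frac1{Re}a(\vec e_u,\vec v)+b(\vec v,e_p)=0$ for all $\vec v\in\vec X$, whence $\lvert b(\vec v,e_p)\rvert = \frac1{Re}\lvert a(\vec e_u,\vec v)\rvert \le \frac{A}{Re}\nx{\vec e_u}\nx{\vec v}$ by continuity of $a$; dividing by $\nx{\vec v}$, taking the supremum, and invoking \eqref{eq:LBB_condition} produces the crucial bound
\[
\beta\,\nq{e_p} \le \frac{A}{Re}\nx{\vec e_u},
\]
which controls the pressure error linearly by the velocity error. Testing instead with $(\vec v,q)=(\vec e_u,e_p)$ makes the two $b$-terms cancel, and coercivity of $a$ gives the energy identity
\[
\frac{\alpha}{Re}\nx{\vec e_u}^2 + \gamma_h\nq{e_p}^2 \le \gamma_h\,(p,e_p) \le \gamma_h\,\nq{p}\,\nq{e_p}.
\]

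The final step is the bootstrap. Dropping the nonnegative term $\gamma_h\nq{e_p}^2$ in the energy identity and substituting the inf-sup bound $\nq{e_p}\le \frac{A}{\beta Re}\nx{\vec e_u}$ on its right-hand side yields $\frac{\alpha}{Re}\nx{\vec e_u}^2 \le \frac{A\gamma_h}{\beta Re}\nq{p}\nx{\vec e_u}$, so that (after dividing by $\nx{\vec e_u}$) $\nx{\vec e_u}\le \frac{A}{\alpha\beta}\,\gamma_h\,\nq{p}$, with the $Re$ factors cancelling. Feeding this estimate back into the inf-sup bound gives $\nq{e_p}\le \frac{A^2}{\alpha\beta^2 Re}\,\gamma_h\,\nq{p}$, and rescaling the velocity bound by $\frac1{\sqrt{Re}}$ and the pressure bound by $\sqrt{Re}$ delivers \eqref{eq-error-continuous-op-normeh} with a constant $C=\max\!\big(\tfrac{A}{\alpha\beta},\tfrac{A^2}{\alpha\beta^2}\big)$ that is manifestly independent of both $\gamma_h$ and $Re$.

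The main obstacle is precisely gaining the full power of $\gamma_h$: the energy identity on its own only gives $\nx{\vec e_u}=O(\sqrt{Re\,\gamma_h})$, i.e. the suboptimal $\sqrt{\gamma_h}$ rate. The decisive observation is that the forcing on the right-hand side is itself proportional to $\gamma_h$ and involves $e_p$, while the inf-sup condition bounds $\nq{e_p}$ \emph{linearly} by $\nx{\vec e_u}$; this lets one absorb a further factor of $\nx{\vec e_u}$ and bootstrap to the linear rate. The only bookkeeping that requires attention is tracking the $Re$-dependence through each inequality so that it cancels in the end and the constant is genuinely $Re$-independent.
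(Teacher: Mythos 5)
Your proof is correct and follows essentially the same route as the paper's: an energy (coercivity) estimate whose right-hand side is $\gamma_h(p,e_p)$, combined with the inf-sup condition applied to the momentum-only error equation to bound $\nq{e_p}$ linearly by $\frac{1}{Re}\nx{\vec e_u}$, and a bootstrap that upgrades the naive $\sqrt{\gamma_h}$ rate to $\gamma_h$. Your explicit verification that $e_p$ has zero mean (so that the LBB condition applies) is a small point the paper leaves implicit, and is a welcome addition.
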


\begin{proof}
From the definition of $\polP_{\gamma_h,0}^{\vec X,L^2}$, for any $(\vec v,q)\in\vec X\times L^2$, we infer that
$$
a_{\gamma_h}\big( (\vec{\tilde u},\tilde p),(\vec v,q)\big) = a_{\gamma_h}\big( (\vec{ u}, p),(\vec v,q)\big) - \gamma_h\left(p,q\right).
$$
Using the coercivity of $a_{\gamma_h}$, we obtain
\begin{align}
C'\norm[\gamma_h]{\vec u-\vec{\tilde u},p-\tilde p}^2& \leq  a_{\gamma_h}\big( (\vec{\tilde u}-\vec u,\tilde p-p),(\vec{\tilde u}-\vec u,\tilde p-p)\big) \nonumber \\
&\leq \gamma_h\left| \left(p, \tilde p-p\right)\right| \leq \gamma_h \nq{p}\nq{\tilde p-p}. \label{eq-proj-cont-1}
\end{align}
Instead of directly applying Young's inequality, we want to replace the norm of $(\tilde p-p)$ by the one of $(\vec{\tilde u}-\vec u)$. Using $(\vec v,0)$ as a test function in the definition of $\polP_{\gamma_h,0}^{\vec X,L^2}$ yields:
$$
\forall \vec v\in\vec X,\quad \iRe a(\vec{\tilde u}-\vec u, \vec v) + b(\vec v,\tilde p-p) = 0.
$$
Using the LBB condition \eqref{eq:LBB_condition}, we deduce that
\begin{equation}\label{eq:lbb_93_inter}
\beta \nq{\tilde p-p}\leq \sup_{\vec v\in\vec X\backslash\{0\}}\frac{b(\vec v,\tilde p-p)}{\nx{\vec v}} = \sup_{\vec v\in\vec X\backslash\{0\}}\frac{a(\vec u-\vec{\tilde u},\vec v)}{\nx{\vec v}} \leq \frac{A}{Re}\nx{\vec u-\vec{\tilde u}}.
\end{equation}
Using this inequality in \eqref{eq-proj-cont-1} yields
\begin{equation}
\norm[\gamma_h]{\vec u-\vec{\tilde u},p-\tilde p}^2 \leq \frac{C\gamma_h}{Re}\nq{p}\nx{\vec u-\vec{\tilde u}} \leq \frac{C\gamma_h}{\sqrt{Re}}\nq{p}\norm[\gamma_h]{\vec u-\vec{\tilde u},p-\tilde p},
\end{equation}
which in turn implies that
\begin{equation*}
\norm[\gamma_h]{\vec u-\vec{\tilde u},p-\tilde p} \leq \frac{C\gamma_h}{\sqrt{Re}}\nq{p}.
\end{equation*}
From this inequality, we obtain
\begin{equation}\label{eq:93_1}
\frac{1}{\sqrt{Re}}\nx{\vec u-\vec{\tilde u}} \leq \frac{C\gamma_h}{\sqrt{Re}}\nq{p}.
\end{equation}
Inserting this inequality~\eqref{eq:93_1} in \eqref{eq:lbb_93_inter} yields
\begin{equation}\label{eq:93_2}
\sqrt{Re} \nq{\tilde p-p}\leq \frac{A}{\beta\sqrt{Re}}\nx{\vec u-\vec{\tilde u}} \leq \frac{C\gamma_h}{\sqrt{Re}}\nq{p},
\end{equation}
and the desired result is finally obtained from \eqref{eq:93_1} and \eqref{eq:93_2}.
\end{proof}

\begin{proposition}\label{thm_proj_continu}
Let $\vec u\in\vec X\cap H^2$ and $p\in H^1\cap Q$. Let us introduce $(\vec{\tilde u},\tilde p) = \polP_{\gamma_h,0}^{\vec X,L^2}(\vec u,p)$. Then $\vec{\tilde u}\in H^2$, $\tilde p\in H^1\cap Q$ and the following estimates hold, for $C$ only depending on $\Omega$:
\begin{align}
\norm[H^2]{\vec{\tilde u}} &\leq C\left( \norm[H^2]{\vec u} + \gamma_h\norm[H^1]{p}\right), \label{eq-stability-continuous-op-u} \\
\norm[H^1]{\tilde p} &\leq C \norm[H^1]{p}, \label{eq-stability-continuous-op-p} \\
\iRe\norm[H^2]{\vec{\tilde u}-\vec u} + \left(1+\frac{\gamma_h}{Re}\right)\norm[H^1]{\tilde p-p} &\leq \frac{C\gamma_h}{Re}\norm[H^1]{p}.\label{eq-error-continuous-op}
\end{align}
\end{proposition}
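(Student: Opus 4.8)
The plan is to reduce everything to a single application of Theorem~\ref{thm_beirao}. First I would write the strong (Euler--Lagrange) form of the defining relation~\eqref{Definition_projection} for $(\vec{\tilde u},\tilde p)=\polP_{\gamma_h,0}^{\vec X,L^2}(\vec u,p)$. Testing against $(\vec v,0)$ and against $(\vec 0,q)$ separately, and using $a(\vec u,\vec v)=(\nabla\vec u,\nabla\vec v)$ together with $b(\vec v,p)=-(\nabla\cdot\vec v,p)$, one finds that $(\vec{\tilde u},\tilde p)$ satisfies, in the weak sense,
$$
-\iRe\Delta\vec{\tilde u}+\nabla\tilde p = -\iRe\Delta\vec u+\nabla p,\qquad \nabla\cdot\vec{\tilde u}+\gamma_h\tilde p=\nabla\cdot\vec u.
$$

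Next I would pass to the differences $\vec w=\vec{\tilde u}-\vec u$ and $r=\tilde p-p$. Subtracting the two sides, these solve the homogeneous-momentum system
$$
-\iRe\Delta\vec w+\nabla r=0,\qquad \nabla\cdot\vec w+\gamma_h r=-\gamma_h p,
$$
which is exactly the form required by Theorem~\ref{thm_beirao} with $\mu=\iRe$, $\lambda=\gamma_h$, $\vec f=0$ and $g=-\gamma_h p$. Before invoking the theorem I must check its hypotheses. That $\vec w\in\vec X$ is immediate since $\vec{\tilde u},\vec u\in\vec X$, and $g=-\gamma_h p\in H^1\cap Q$ because $p\in H^1\cap Q$. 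The one point needing a short argument is the zero-mean property $r\in Q$: taking $(\vec 0,1)$ as test function in~\eqref{Definition_projection} and using that $\vec{\tilde u},\vec u\in H^1_0$ gives $b(\vec{\tilde u},1)=b(\vec u,1)=0$ by the divergence theorem, so $\gamma_h(\tilde p,1)=0$, whence $\int_\Omega\tilde p=0$ and $r=\tilde p-p\in Q$.

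With the hypotheses in place, Theorem~\ref{thm_beirao} yields $\vec w\in H^2$, $r\in H^1$ and
$$
\iRe\norm[H^2]{\vec w}+\left(1+\frac{\gamma_h}{Re}\right)\norm[H^1]{r}\leq C\,\iRe\,\gamma_h\norm[H^1]{p}=\frac{C\gamma_h}{Re}\norm[H^1]{p},
$$
which is precisely~\eqref{eq-error-continuous-op}; since $\vec u\in H^2$ and $p\in H^1\cap Q$ this also gives $\vec{\tilde u}=\vec u+\vec w\in H^2$ and $\tilde p=p+r\in H^1\cap Q$. The stability bounds~\eqref{eq-stability-continuous-op-u}--\eqref{eq-stability-continuous-op-p} then follow by the triangle inequality: the displayed estimate gives $\norm[H^2]{\vec w}\leq C\gamma_h\norm[H^1]{p}$ and $\norm[H^1]{r}\leq C\frac{\gamma_h}{Re}\norm[H^1]{p}\leq C\norm[H^1]{p}$ (using $0<\gamma_h<1<Re$), so that $\norm[H^2]{\vec{\tilde u}}\leq\norm[H^2]{\vec u}+\norm[H^2]{\vec w}$ and $\norm[H^1]{\tilde p}\leq\norm[H^1]{p}+\norm[H^1]{r}$ produce the claimed inequalities.

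The computation is essentially routine once this reduction is set up; the only genuinely delicate step—and the one I would be most careful with—is the passage to the strong form and the verification of the hypotheses of Theorem~\ref{thm_beirao}, in particular the zero-mean identity that places $r$ in $Q$ so that the a priori $H^2\times H^1$ regularity estimate applies. Everything after that is bookkeeping with the triangle inequality under the standing assumption $0<\gamma_h<1<Re$.
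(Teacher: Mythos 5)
Your proof is correct and follows essentially the same route as the paper: derive the strong form of the defining relation, observe that the differences $\vec{\tilde u}-\vec u$ and $\tilde p-p$ solve the penalized Stokes system with data $\vec f=0$, $g=-\gamma_h p$, apply Theorem~\ref{thm_beirao} with $\mu=\iRe$ and $\lambda=\gamma_h$ to get \eqref{eq-error-continuous-op}, and conclude \eqref{eq-stability-continuous-op-u}--\eqref{eq-stability-continuous-op-p} by the triangle inequality. Your verification that $\tilde p$ has zero mean (testing against $(\vec 0,1)$) is a hypothesis check the paper leaves implicit, and it is a worthwhile addition.
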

\begin{proof}
We start by proving \eqref{eq-error-continuous-op}. Owing to the definition of $\polP_{\gamma_h,0}^{\vec X,L^2}$, we have 
\begin{equation}\label{eq-strong-continuous-op}
\begin{cases}
\displaystyle{-\iRe \Delta \left(\vec {\tilde u}-\vec u\right) + \nabla \left(\tilde p-p\right)} &= 0 , \\
\displaystyle{\nabla\cdot \left(\vec {\tilde u}-\vec u\right) + \gamma_h \left(\tilde p-p\right)} &= -\gamma_h p.
\end{cases}
\end{equation}
Applying Theorem~\ref{thm_beirao} with $\mu=\iRe$ and $\lambda=\gamma_h$ yields \eqref{eq-error-continuous-op}.

From this inequality, we infer that
\begin{align*}
\norm[H^2]{\vec{\tilde u}-\vec u} &\leq C\gamma_h \norm[H^1]{p}, \\
\norm[H^1]{\tilde p - p} &\leq C\norm[H^1]{p}.
\end{align*}
Then \eqref{eq-stability-continuous-op-u} and \eqref{eq-stability-continuous-op-p} are obtained using triangular inequalities.

\end{proof}

\subsubsection{The projection operator $\polP_{\gamma_h,\gamma_h}^{X_h,Q_h}$}
Let us now turn our attention to the projection operator $\polP_{\gamma_h,\gamma_h}^{X_h,Q_h}$.
\begin{proposition}\label{thm_proj_discrete}
Let $0\leqslant s\leqslant p_u$ and $0\leqslant s'\leqslant p_p$. Let $\vec u\in\vec X\cap H^{1+s}$ and $p\in H^{1+s'}\cap Q$. Let us introduce $(\vec{u}_h, p_h) = \polP_{\gamma_h,\gamma_h}^{X_h,Q_h}(\vec u,p)$.  Then  the following estimates hold, for $C$ depending only on $\Omega$:
\begin{align}
 \norm[\gamma_h]{\vec u-\vec u_h,p-p_h}^2 &\leq   \frac{C\; Re}{\gamma_h}\left( \frac{h^{2s}}{Re} \lVert \vec u\rVert_{H^{1+s}}^2 + h^{2(1+s')} \gamma_h\lVert p\rVert_{H^{1+s'}}^2\right), \label{eq:Norm_proj_H1} \\
 \lVert \vec u-\vec u_h \rVert_{L^2}^2&\leq   \frac{C\,Re^3}{\gamma_h^2}\left(\frac{h^{2(s+1)}}{Re} \lVert \vec u\rVert_{H^{1+s}}^2 + h^{4+2s'}\gamma_h \lVert p\rVert_{H^{1+s'}}^2\right).\label{eq:Norm_proj_L2}
 \end{align}
\end{proposition}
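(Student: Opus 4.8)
The plan is to treat $\polP_{\gamma_h,\gamma_h}^{X_h,Q_h}$ as a Galerkin projection for the coercive and continuous bilinear form $a_{\gamma_h}$, obtaining \eqref{eq:Norm_proj_H1} by a Céa-type argument and \eqref{eq:Norm_proj_L2} by an Aubin--Nitsche duality argument. Throughout I would write $(e_u,e_p)=(\vec u-\vec u_h,p-p_h)$.

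First I would record the Galerkin orthogonality coming from the definition \eqref{Definition_projection} (with $\lambda=\gamma=\gamma_h$), namely
$$
a_{\gamma_h}\big((e_u,e_p),(\vec v_h,q_h)\big)=0 \qquad \forall (\vec v_h,q_h)\in X_h\times Q_h.
$$
Combining this with the coercivity \eqref{eq-alambda-coercive} and the continuity \eqref{eq-alambda-continuous} in the usual way gives, for every $(\vec v_h,q_h)\in X_h\times Q_h$,
$$
\norm[\gamma_h]{e_u,e_p}\leq C\frac{\sqrt{Re}}{\sqrt{\gamma_h}}\,\norm[\gamma_h]{\vec u-\vec v_h,p-q_h},
$$
the factor $\sqrt{Re}/\sqrt{\gamma_h}$ being exactly the degenerating continuity constant of \eqref{eq-alambda-continuous}. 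Choosing $(\vec v_h,q_h)=(\mathcal C_h\vec u,\mathcal C_h p)$ and using \eqref{eq:ClementP} (norm index $1$, regularity $1+s$ for $\vec u$; norm index $0$, regularity $1+s'$ for $p$) to bound $\nx{\vec u-\mathcal C_h\vec u}\leq Ch^s\norm[H^{1+s}]{\vec u}$ and $\nq{p-\mathcal C_h p}\leq Ch^{1+s'}\norm[H^{1+s'}]{p}$, I would expand the definition of $\norm[\gamma_h]{\cdot}$ and square, which produces \eqref{eq:Norm_proj_H1} directly.

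For the $L^2$ bound \eqref{eq:Norm_proj_L2} I would use duality. Introduce the adjoint problem: find $(\vec w,r)\in\vec X\times Q$ with $a_{\gamma_h}\big((\vec v,q),(\vec w,r)\big)=(e_u,\vec v)$ for all $(\vec v,q)\in\vec X\times L^2$. Since $a$ and the $L^2$ product are symmetric while the two $b$-terms flip sign, this dual problem is equivalent (after a harmless sign convention on $r$) to the penalised Stokes system
$$
-\iRe\Delta\vec w+\nabla r=e_u,\qquad \nabla\cdot\vec w+\gamma_h r=0,
$$
so Theorem~\ref{thm_beirao} applies with $\mu=1/Re$, $\lambda=\gamma_h$, $\vec f=e_u$ and $g=0\in H^1\cap Q$, yielding $\norm[H^2]{\vec w}\leq C\,Re\,\nq{e_u}$ and $\norm[H^1]{r}\leq C\nq{e_u}$. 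Testing the dual problem with $(\vec v,q)=(e_u,e_p)$ gives $\nq{e_u}^2=a_{\gamma_h}\big((e_u,e_p),(\vec w,r)\big)$; subtracting $(\mathcal C_h\vec w,\mathcal C_h r)\in X_h\times Q_h$ by Galerkin orthogonality and invoking \eqref{eq-alambda-continuous},
$$
\nq{e_u}^2\leq C\frac{\sqrt{Re}}{\sqrt{\gamma_h}}\,\norm[\gamma_h]{e_u,e_p}\,\norm[\gamma_h]{\vec w-\mathcal C_h\vec w,r-\mathcal C_h r}.
$$
Using \eqref{eq:ClementP} with the regularity bounds above and $\gamma_h<1<Re$, the last factor is controlled by $Ch\sqrt{Re}\,\nq{e_u}$; dividing by $\nq{e_u}$, squaring, and inserting the already-proved \eqref{eq:Norm_proj_H1} for $\norm[\gamma_h]{e_u,e_p}^2$ delivers \eqref{eq:Norm_proj_L2}.

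The routine ingredients are Céa's lemma, elliptic duality, and the interpolation estimates. The point requiring genuine care is the book-keeping of the $Re$ and $\gamma_h$ dependence: because the continuity constant of $a_{\gamma_h}$ degenerates like $\gamma_h^{-1/2}$, every application of Céa or of the duality estimate contributes a factor $\sqrt{Re/\gamma_h}$, and these must be tracked precisely to recover the stated powers $Re/\gamma_h$ and $Re^3/\gamma_h^2$ (this is also the source of the eventual suboptimality in the convergence rate). The main structural obstacle is verifying that the adjoint of $a_{\gamma_h}$ is exactly the penalised Stokes operator of Theorem~\ref{thm_beirao}, in particular that $g=0\in H^1\cap Q$ so that the full $H^2\times H^1$ elliptic regularity estimate is at our disposal.
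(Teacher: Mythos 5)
Your proposal is correct and follows essentially the same route as the paper: a C\'ea-type argument using the Galerkin orthogonality, the coercivity and the degenerating continuity constant $\sqrt{Re/\gamma_h}$ of $a_{\gamma_h}$ together with the interpolation estimates \eqref{eq:ClementP} for \eqref{eq:Norm_proj_H1}, and an Aubin--Nitsche duality argument based on the penalised Stokes system and Theorem~\ref{thm_beirao} for \eqref{eq:Norm_proj_L2}. The only cosmetic difference is that the paper defines $(\vec w,r)$ directly via the strong penalised Stokes system and then uses the identity $a_{\gamma_h}\big((\vec w,r),(\vec v,q)\big)=(\vec u-\vec u_h,\vec v)$, whereas you pose the adjoint variational problem first and identify it with that system after a sign change on $r$ --- the two formulations coincide.
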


\begin{proof}
From the definition of the projection, for any $(\vec v_h,q_h)\in X_h\times Q_h$, we infer that
$$
a_{\gamma_h}\big((\vec u-\vec u_h,p-p_h),(\vec v_h,q_h)\big) = 0.
$$
Using this Galerkin orthogonality and the continuity of $a_{\gamma_h}$, we obtain
\begin{align}
&\ a_{\gamma_h}\big((\vec u-\vec u_h,p-p_h),(\vec u - \vec u_h,p-p_h)\big) = a_{\gamma_h}\big((\vec u-\vec u_h,p-p_h),(\vec u - \mathcal C_h\vec u,p-\mathcal C_h p)\big) \\
&\leq \frac{C\sqrt{Re}}{\sqrt{\gamma_h}}\norm[\gamma_h]{\vec u-\vec u_h,p-p_h}\norm[\gamma_h]{\vec u-\mathcal C_h\vec u,p-\mathcal C_h p}.
\end{align}
Using the definitions of the norm and $a_{\gamma_h}$, we also infer that
$$
\norm[\gamma_h]{\vec u-\vec u_h,p-p_h}^2 \leq C a_{\gamma_h}\big((\vec u-\vec u_h,p-p_h),(\vec u-\vec u_h,p-p_h)\big).
$$
As a result, we obtain
$$
\norm[\gamma_h]{\vec u-\vec u_h,p-p_h}^2 \leq \frac{C\sqrt{Re}}{\sqrt{\gamma_h}}\norm[\gamma_h]{\vec u-\vec u_h,p-p_h}\norm[\gamma_h]{\vec u-\mathcal C_h\vec u,p-\mathcal C_h p}.
$$
Using Young's inequality yields
$$
\norm[\gamma_h]{\vec u-\vec u_h,p-p_h}^2\leq \frac{C\, Re}{\gamma_h}\norm[\gamma_h]{\vec u-\mathcal C_h\vec u,p-\mathcal C_h p}^2.
$$
which in turn implies that:
\begin{equation}
\norm[\gamma_h]{\vec u-\vec u_h,p-p_h}^2\leq   \frac{2C\; Re}{\gamma_h}\left( \frac{1}{Re} \lVert \vec u-\mathcal C_h\vec u\rVert_{H^{1}}^2 + \gamma_h\lVert p-\mathcal C_h p \rVert_{L^2}^2\right).
\end{equation}
Using properties \eqref{eq:ClementP} of $\mathcal C_h$  yields \eqref{eq:Norm_proj_H1}.

The $L^2$ estimate is obtained using a Nitsche-Aubin argument. Let us define $\vec w\in \vec X$ and $r\in Q$ such that
\begin{equation}
\begin{cases}
\displaystyle{-\iRe \Delta \vec w + \nabla r} &= \vec u-\vec u_h, \\
\displaystyle{\nabla\cdot \vec w + \gamma_h r} &= 0.
\end{cases}
\end{equation}
Since $\vec u-\vec u_h\in L^2$, we can apply Theorem~\ref{thm_beirao} and we obtain $\vec w\in H^2$, $r\in H^1$ together with the estimate
\begin{equation}\label{eq-estimate-w-r}
\iRe\lVert w\rVert_{H^2} + \lVert r \rVert_{H^1} \leq C \lVert \vec u-\vec u_h\rVert_{L^2} .
\end{equation}
Owing to the definition of $\vec w$ and $r$, we have, for any $\vec v\in\vec X$ and $q\in L^2$
$$
a_{\gamma_h}\big((\vec w,r),(\vec v,q)\big) = \left(\vec u-\vec u_h, \vec v\right).
$$
Let us use this identity for $\vec v = \vec u-\vec u_h$ and $q=p_h-p$. Then we obtain
\begin{align*}
\norm[L^2]{\vec u-\vec u_h}^2 &= a_{\gamma_h}\big((\vec w,r),(\vec u-\vec u_h,p_h-p)\big) \\
&=  a_{\gamma_h}\big((\vec u-\vec u_h,p-p_h),(\vec w,-r)\big) \\
&= a_{\gamma_h}\big((\vec u-\vec u_h,p-p_h),(\vec w-\mathcal C_h w,\mathcal C_h r-r)\big).
\end{align*}
Owing to the continuity of $a_{\gamma_h}$, we deduce that
\begin{equation}\label{eq-l2-first}
\norm[L^2]{\vec u-\vec u_h}^2 \leq \frac{C\sqrt{Re}}{\sqrt{\gamma_h}}\norm[\gamma_h]{\vec u-\vec u_h,p-p_h}\norm[\gamma_h]{\vec w-\mathcal C_h\vec w,r-\mathcal C_h r}
\end{equation}
Let us estimate the term $\norm[\gamma_h]{\vec w-\mathcal C_h\vec w,r-\mathcal C_h r}$. From the definition of the norm and the properties of $\mathcal C_h$ (see \eqref{eq:ClementP}), we infer that
$$
\norm[\gamma_h]{\vec w-\mathcal C_h\vec w,r-\mathcal C_h r} \leq Ch\left( \frac{1}{\sqrt{Re}}\norm[H^2]{\vec w} + \sqrt{\gamma_h}\norm[H^1]{r}\right).
$$
Using the estimate \eqref{eq-estimate-w-r} and $Re>1$ leads to
$$
\norm[\gamma_h]{\vec w-\mathcal C_h\vec w,r-\mathcal C_h r} \leq Ch\sqrt{Re} \norm[L^2]{\vec u-\vec u_h}.
$$
Inserting this inequality in \eqref{eq-l2-first} yields
$$
\norm[L^2]{\vec u-\vec u_h} \leq \frac{C\,Re\,h}{\sqrt{\gamma_h}}\norm[\gamma_h]{\vec u-\vec u_h,p-p_h}.
$$
Taking the square of this inequality, together with \eqref{eq:Norm_proj_H1} yields \eqref{eq:Norm_proj_L2}.
\end{proof}

\begin{remark}
From \eqref{eq:Norm_proj_H1} and \eqref{eq:Norm_proj_L2}, one can note that there exists a constant $C$ independent of $h$, but depending on $\Omega$ and $Re$ such that, for any $\vec u\in \vec X\cap H^2$ and $p\in H^1\cap Q$, 
\begin{align*}
\norm[H^1]{\vec u-\vec u_h} \leq C\frac{h}{\sqrt{\gamma_h}}\left(\norm[H^2]{\vec u} + \norm[H^1]{p}\right), \\
\norm[L^2]{\vec u-\vec u_h} \leq C\frac{h^2}{\gamma_h}\left(\norm[H^2]{\vec u} + \norm[H^1]{p}\right).
\end{align*}
From interpolation theory, we obtain the estimate, for $0\leq s\leq 1$ : 
\begin{equation}\label{eq:proj-hs}
\norm[H^s]{\vec u-\vec u_h} \leq C\left(\frac{h}{\sqrt{\gamma_h}}\right)^{2-s}\left(\norm[H^2]{\vec u} + \norm[H^1]{p}\right).
\end{equation}
\end{remark}

\subsection{Error estimates for the operator $\polP_{\gamma_h,0}^{X_h,Q_h}$} 
 
We now have all the tools at hand to prove an approximation result for the operator that we want to use.

\begin{theorem}\label{Theorem_projection_0}
Let $\vec u\in\vec X\cap H^{2}$ and $p\in H^{1}\cap Q$. Let us introduce $(\vec{u}_h, p_h) = \polP_{\gamma_h,0}^{X_h,Q_h}(\vec u,p)$.  Then  the following estimates hold, for $C$ only depending on $\Omega$:
\begin{align} 
\norm[\gamma_h]{\vec u-\vec u_h, p-p_h}^2 &\leq C\left( \frac{h^2}{\gamma_h}\norm[H^2]{\vec u}^2 + h^2\gamma_h\norm[H^1]{p}^2 +  {Re\ h^2}\norm[H^1]{p}^2 + \frac{\gamma_h^2}{Re}\nq{p}^2\right), \label{Norm_proj_H1_final} \\
  \lVert \vec u-\vec u_h \rVert_{L^2}^2&\leq  C\left( \frac{Re^2 h^4}{\gamma_h^2}\norm[H^2]{\vec u}^2+\frac{Re^3 h^4}{\gamma_h}\norm[H^1]{p}^2 + \gamma_h^2\nq{p}^2\right).\label{Norm_proj_L2_final}
 \end{align}
\end{theorem}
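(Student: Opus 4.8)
The plan is to exploit the composition identity recorded just before Theorem~\ref{thm_beirao}: if we set $(\vec{\tilde u},\tilde p) = \polP_{\gamma_h,0}^{\vec X,L^2}(\vec u,p)$, then
$$
(\vec u_h,p_h) = \polP_{\gamma_h,0}^{X_h,Q_h}(\vec u,p) = \polP_{\gamma_h,\gamma_h}^{X_h,Q_h}(\vec{\tilde u},\tilde p).
$$
This factorization splits the total error into a purely \emph{continuous} part, the penalty consistency error $(\vec u-\vec{\tilde u},p-\tilde p)$, and a purely \emph{discrete} part, the Galerkin error $(\vec{\tilde u}-\vec u_h,\tilde p-p_h)$, each of which is already controlled by a proposition established above. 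First I would write the triangle inequality in the $\norm[\gamma_h]{\cdot}$ norm and square it, so that the task reduces to bounding the two pieces separately.

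For the $H^1$-type estimate \eqref{Norm_proj_H1_final}, the continuous piece is handled by Proposition~\ref{thm_proj_continu_first}: since $\gamma_h<1<Re$ we have $\sqrt{\gamma_h}\,\nq{p-\tilde p}\leq \sqrt{Re}\,\nq{p-\tilde p}$, so $\norm[\gamma_h]{\vec u-\vec{\tilde u},p-\tilde p}\leq \frac{C\gamma_h}{\sqrt{Re}}\nq{p}$, which produces the last term $\frac{\gamma_h^2}{Re}\nq{p}^2$. The discrete piece is bounded by Proposition~\ref{thm_proj_discrete} applied to the input $(\vec{\tilde u},\tilde p)$ with $s=s'=1$; this is legitimate because Proposition~\ref{thm_proj_continu} guarantees $\vec{\tilde u}\in H^2$ and $\tilde p\in H^1$. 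The resulting bound is expressed through $\norm[H^2]{\vec{\tilde u}}$ and $\norm[H^1]{\tilde p}$, which I would convert back to the data via the stability estimates \eqref{eq-stability-continuous-op-u}--\eqref{eq-stability-continuous-op-p}, i.e. $\norm[H^2]{\vec{\tilde u}}\leq C(\norm[H^2]{\vec u}+\gamma_h\norm[H^1]{p})$ and $\norm[H^1]{\tilde p}\leq C\norm[H^1]{p}$. Expanding and collecting powers yields $\frac{h^2}{\gamma_h}\norm[H^2]{\vec u}^2$, $h^2\gamma_h\norm[H^1]{p}^2$ and $Re\,h^4\norm[H^1]{p}^2$; using $h<1$ to bound $h^4\leq h^2$ gives the third term $Re\,h^2\norm[H^1]{p}^2$, completing the estimate.

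For the $L^2$ estimate \eqref{Norm_proj_L2_final} I would repeat the same splitting in the $L^2$ norm. The continuous contribution satisfies $\norm[L^2]{\vec u-\vec{\tilde u}}\leq \nx{\vec u-\vec{\tilde u}}\leq C\gamma_h\nq{p}$ by Proposition~\ref{thm_proj_continu_first}, giving the $\gamma_h^2\nq{p}^2$ term. The discrete contribution is controlled by the $L^2$ bound \eqref{eq:Norm_proj_L2} of Proposition~\ref{thm_proj_discrete} with $s=s'=1$, into which I again insert the stability estimates for $\vec{\tilde u},\tilde p$; after simplifying with $h<1$, $\gamma_h<1$ and $Re>1$, the intermediate terms $Re^2h^4\norm[H^1]{p}^2$ and $\frac{Re^3h^6}{\gamma_h}\norm[H^1]{p}^2$ collapse into $\frac{Re^3h^4}{\gamma_h}\norm[H^1]{p}^2$, while the leading term is $\frac{Re^2h^4}{\gamma_h^2}\norm[H^2]{\vec u}^2$.

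I expect the main obstacle to be bookkeeping rather than conceptual: Proposition~\ref{thm_proj_discrete} is phrased in terms of the regularity of its own argument $(\vec{\tilde u},\tilde p)$, so one must carefully route every $\norm[H^2]{\vec{\tilde u}}$ and $\norm[H^1]{\tilde p}$ back to $\norm[H^2]{\vec u}$ and $\norm[H^1]{p}$, tracking the cross term $\gamma_h\norm[H^1]{p}$ that leaks into $\norm[H^2]{\vec{\tilde u}}$, and then absorbing the various powers of $Re$, $\gamma_h$ and $h$ using the standing assumptions $\gamma_h<1<Re$ and $h<1$ so that the many intermediate terms coalesce into the clean right-hand sides stated.
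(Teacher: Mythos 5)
Your overall strategy is exactly the paper's: factor the operator as $\polP_{\gamma_h,0}^{X_h,Q_h}(\vec u,p)=\polP_{\gamma_h,\gamma_h}^{X_h,Q_h}\big(\polP_{\gamma_h,0}^{\vec X,L^2}(\vec u,p)\big)$, split the error by the triangle inequality into the continuous penalty error (controlled by Proposition~\ref{thm_proj_continu_first}) and the discrete Galerkin error (controlled by Proposition~\ref{thm_proj_discrete}), and route the norms of $(\vec{\tilde u},\tilde p)$ back to the data via the stability estimates \eqref{eq-stability-continuous-op-u}--\eqref{eq-stability-continuous-op-p}. The treatment of the $L^2$ estimate, including the collapse of the intermediate pressure terms using $\gamma_h<1<Re$, also matches the paper.

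There is, however, one concrete flaw in the execution: you invoke Proposition~\ref{thm_proj_discrete} with $s'=1$, which requires the pressure argument to lie in $H^{1+s'}=H^2$ and produces a bound involving $\norm[H^2]{\tilde p}$. But Proposition~\ref{thm_proj_continu} only gives $\tilde p\in H^1$ with $\norm[H^1]{\tilde p}\leq C\norm[H^1]{p}$; no $H^2$ control of $\tilde p$ is available (nor is it assumed for $p$ itself). Your own justification ("legitimate because \dots $\tilde p\in H^1$") does not match the hypothesis you are using, and the spurious factors $Re\,h^4\norm[H^1]{p}^2$ and $\frac{Re^3h^6}{\gamma_h}\norm[H^1]{p}^2$ in your bookkeeping are artifacts of this mismatch. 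The paper takes $s=1$, $s'=0$ instead, which only needs $\tilde p\in H^1$ and yields directly
\begin{equation*}
\norm[\gamma_h]{\vec{\tilde u}-\vec u_h,\tilde p-p_h}^2 \leq \frac{C\,Re\,h^2}{\gamma_h}\left(\frac{1}{Re}\norm[H^2]{\vec{\tilde u}}^2+\gamma_h\norm[H^1]{\tilde p}^2\right),
\end{equation*}
from which the terms $\frac{h^2}{\gamma_h}\norm[H^2]{\vec u}^2$, $h^2\gamma_h\norm[H^1]{p}^2$ and $Re\,h^2\norm[H^1]{p}^2$ follow without invoking $h<1$. With $s'$ corrected to $0$ (and likewise in the $L^2$ step, where $h^{4+2s'}=h^4$), your argument coincides with the paper's proof.
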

 
\begin{proof}
Let us introduce $(\vec{\tilde u},\tilde p)=\polP_{\gamma_h,0}^{\vec X,L^2}(\vec u,p)$ so that we have $(\vec u_h,p_h)=\polP_{\gamma_h,\gamma_h}^{X_h,Q_h}(\vec{\tilde u},\tilde p)$. We start from triangular inequalities to get
$$
\norm[\gamma_h]{\vec u-\vec u_h, p-p_h}^2 \leq 2\norm[\gamma_h]{\vec u-\vec{\tilde u},p-\tilde p}^2 + 2\norm[\gamma_h]{\vec{\tilde u}-\vec u_h,\tilde p-p_h}^2.
$$
Let us now bound each term separately. From Proposition~\ref{thm_proj_continu_first}, we have the estimate
$$
\norm[\gamma_h]{\vec u-\vec{\tilde u},p-\tilde p}^2 \leq \frac{C\gamma_h^2}{Re}\nq{p}^2.
$$
For the second term, we use Proposition~\ref{thm_proj_discrete} with $s=1$ and $s'=0$ to obtain
$$
\norm[\gamma_h]{\vec{\tilde u}-\vec u_h,\tilde p-p_h}^2 \leq \frac{C Re\, h^2}{\gamma_h}\left( \frac{1}{Re}\norm[H^2]{\vec {\tilde u}}^2 + \gamma_h\norm[H^1]{\tilde p}^2\right).
$$
From \eqref{eq-stability-continuous-op-u}, we infer that
$$
\norm[\gamma_h]{\vec{\tilde u}-\vec u_h,\tilde p-p_h}^2 \leq \frac{C Re\, h^2}{\gamma_h}\left( \iRe\norm[H^2]{\vec u}^2 + \frac{\gamma_h^2}{Re}\norm[H^1]{p}^2 + \gamma_h\norm[H^1]{p}^2\right).
$$
Thus we have the desired estimate
$$
\norm[\gamma_h]{\vec u-\vec u_h, p-p_h}^2 \leq C\left( \frac{h^2}{\gamma_h}\norm[H^2]{\vec u}^2 + h^2\gamma_h\norm[H^1]{p}^2 +  {Re\ h^2}\norm[H^1]{p}^2 + \frac{\gamma_h^2}{Re}\nq{p}^2\right).
$$

We proceed similarly for the $L^2$ estimate, bounding $\norm[L^2]{\vec{\tilde u}-\vec u_h}$ using Proposition~\ref{thm_proj_discrete} and the error estimate \eqref{eq-error-continuous-op}, and bounding $\norm[L^2]{\vec{\tilde u}-\vec u}$ by $\sqrt{Re}\norm[\gamma_h]{ \vec{\tilde u}-\vec u,\tilde p-p}$ and using \eqref{eq-error-continuous-op-normeh}.
\end{proof} 
 
 \begin{remark}
Note that Eqns. ~\eqref{Norm_proj_H1_final}-\eqref{Norm_proj_L2_final} give a convergence result only if $\gamma_h\to 0$. The leading terms in \eqref{Norm_proj_L2_final} suggest to take $\gamma_h\sim Re^{1/2}h$, which would lead to order 1 in the $L^2$ norm, which is consistent with the results of the numerical simulations presented in Section~\ref{sec-numerics}.
 \end{remark}
 
 \begin{remark}
 We may need approximation results for the $H^s$ norm, for $0<s<1$. There exists a constant $C$ depending on $\Omega$ and $Re$, but not on $h$, such that
 \begin{equation}\label{eq:stokes-operator-hs}
 \norm[H^s]{\vec u-\vec u_h}^2 \leq C\left(\frac{h^{4-2s}}{{\gamma_h^{2-s}}}\left(\norm[H^2]{\vec u}^2 + \norm[H^1]{p}^2\right) + \gamma_h^2\nq{p}^2\right).
 \end{equation}
 This is obtained using the same decomposition as before, together with \eqref{eq:proj-hs}.
 \end{remark}
 
 Let us end this section with a stability result.
 \begin{proposition}\label{prop:stability_projection}
 Let $(\vec u,p)\in \vec X\times Q$ such that $\nabla\cdot\vec u = 0$. Let us introduce $(\vec u_h,p_h) = \polP_{\gamma_h,0}^{X_h,Q_h}(\vec u,p)$. There exists $C$ independent of $\vec u$ and $p$ such that
 \begin{equation}\label{eq:prop_stability_projection}
 \norm[H^1]{\vec u - \vec u_h} \leq C \left( \norm[H^1]{\vec u} + \norm[L^2]{p}\right).
 \end{equation}
 \end{proposition}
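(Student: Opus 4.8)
The plan is to sidestep the higher-regularity machinery of Theorem~\ref{Theorem_projection_0}, which would require $\vec u\in H^2$ and $p\in H^1$, and instead derive a direct energy estimate by testing the defining relation of the operator against the discrete solution itself. The delicate point is that the resulting bound on $\nx{\vec u_h}$ must stay \emph{uniform in $\gamma_h$}, and in particular must not degenerate as $\gamma_h\to0$; the divergence-free hypothesis on $\vec u$ is exactly what makes this possible.

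Writing $(\vec u_h,p_h)=\polP_{\gamma_h,0}^{X_h,Q_h}(\vec u,p)$, I would first use the defining identity \eqref{Definition_projection} with the test pair $(\vec v_h,q_h)=(\vec u_h,p_h)$, namely
$$
a_{\gamma_h}\big((\vec u_h,p_h),(\vec u_h,p_h)\big)=a_0\big((\vec u,p),(\vec u_h,p_h)\big).
$$
For the left-hand side I would invoke the coercivity \eqref{eq-alambda-coercive}, bounding it below by $C'\norm[\gamma_h]{\vec u_h,p_h}^2$ (the mixed $b$-terms $b(\vec u_h,p_h)-b(\vec u_h,p_h)$ cancelling automatically). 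For the right-hand side I would expand
$$
a_0\big((\vec u,p),(\vec u_h,p_h)\big)=\iRe a(\vec u,\vec u_h)+b(\vec u_h,p)-b(\vec u,p_h),
$$
and here the assumption $\nabla\cdot\vec u=0$ is decisive: it forces $b(\vec u,p_h)=0$, removing the only term that couples $\vec u$ to the discrete pressure $p_h$.

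The two surviving terms are then controlled by the continuity of $a$ and $b$,
$$
\iRe a(\vec u,\vec u_h)+b(\vec u_h,p)\leq\Big(\frac{A}{Re}\nx{\vec u}+\nq{p}\Big)\nx{\vec u_h}.
$$
Bounding $\nx{\vec u_h}\leq\sqrt{Re}\,\norm[\gamma_h]{\vec u_h,p_h}$ from the definition of the $\norm[\gamma_h]{\cdot}$-norm, inserting this into the coercivity estimate, and dividing through by $\norm[\gamma_h]{\vec u_h,p_h}$ yields
$$
\norm[\gamma_h]{\vec u_h,p_h}\leq C\Big(\nx{\vec u}+\nq{p}\Big),
$$
with $C$ depending only on $\Omega$, $Re$, $A$ and $C'$, but \emph{not} on $\gamma_h$. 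Using once more $\nx{\vec u_h}\leq\sqrt{Re}\,\norm[\gamma_h]{\vec u_h,p_h}$ together with the triangle inequality $\nx{\vec u-\vec u_h}\leq\nx{\vec u}+\nx{\vec u_h}$ then gives \eqref{eq:prop_stability_projection}.

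The only genuine obstacle is this uniformity in $\gamma_h$. One must resist bounding the right-hand side via the continuity estimate \eqref{eq-alambda-continuous}, whose constant $C\sqrt{Re}/\sqrt{\gamma_h}$ blows up as $\gamma_h\to0$. Testing against $(\vec u_h,p_h)$ itself rather than a generic pair, so that the $b$-cross-terms cancel, combined with the vanishing of $b(\vec u,p_h)$ under the divergence-free assumption, is precisely what confines the dangerous $\sqrt{\gamma_h}\nq{p_h}$ contribution to the (nonnegative) left-hand side and keeps every constant independent of $\gamma_h$.
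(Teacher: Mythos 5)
Your proof is correct and follows essentially the same route as the paper: test the defining relation against $(\vec u_h,p_h)$, use $\nabla\cdot\vec u=0$ to eliminate $b(\vec u,p_h)$, apply coercivity of $a_{\gamma_h}$ and continuity of $a$ and $b$, then conclude by the triangle inequality. The only cosmetic difference is that you carry the full $\norm[\gamma_h]{\cdot}$-norm before converting to $\nx{\vec u_h}$, whereas the paper drops the nonnegative pressure term immediately; both give the same $\gamma_h$-uniform constant.
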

 
 \begin{proof}
 From the definition of $(\vec u_h,p_h)$ and the fact that $\nabla\cdot\vec u=0$, we infer that
 $$
 a_{\gamma_h}\left( (\vec u_h,p_h), (\vec u_h,p_h)\right) = a_0\left( (\vec u,p), (\vec u_h,p_h)\right) = \frac{1}{Re}a(\vec u,\vec u_h) + b(\vec u_h,p).
 $$
 Owing to the coercivity of $a_{\gamma_h}$ and the continuity of $a$ and $b$, we deduce that
 $$
 \frac{\alpha}{Re}\norm[H^1]{\vec u_h}^2 \leq C\norm[H^1]{\vec u_h}\left(\norm[H^1]{\vec u} + \norm[L^2]{p}\right).
 $$
 Using the triangular inequality yields
 $$
 \norm[H^1]{\vec u - \vec u_h} \leq \norm[H^1]{\vec u} + \norm[H^1]{\vec u_h} \leq C \left( \norm[H^1]{\vec u} + \norm[L^2]{p}\right).
 $$
 \end{proof}
\section{Proof of Theorem \ref{thm_improved}}\label{sec-proof}

We are now able to prove Theorem~\ref{thm_improved}, using the previously defined operator $\polP_{\gamma_h,0}^{X_h,Q_h}$. Owing to the definition of $\varphi_p^{n+1}$, $\varphi_u^{n+1},\eta_p^{n+1}$, $\eta_u^{n+1}$, summing \eqref{eq-div-discrete}-\eqref{eq-qmvt-discrete} and subtracting \eqref{eq-div-weak-disc}-\eqref{eq-qmvt-weak-disc} with corresponding test functions $\varphi_p^{n+1}$, $\varphi_u^{n+1}$ yields:

\begin{align}\nonumber
(\delta_t \varphi_u^{n+1},\varphi_u^{n+1}) &+\frac{1}{Re}a(\varphi_u^{n+1},\varphi_u^{n+1}) + \gamma_h(\varphi_p^{n+1},\varphi_p^{n+1}) \\
&=   (\delta_t \eta_u^{n+1},\varphi_u^{n+1}) +\frac{1}{Re}a(\eta_u^{n+1},\varphi_u^{n+1}) 
	+ b(\varphi_u^{n+1},\eta_p^{n+1}) \nonumber \\
	&+ c(\vec u^{n+1},\vec u^{n+1},\varphi_u^{n+1}) -  c(\vec u_h^{n+1},\vec u_h^{n+1},\varphi_u^{n+1})  
	 -	\frac{Ra}{Re^2Pr} (e_\theta^{n+1}\vec e_y,\varphi_u^{n+1}) \nonumber \\
	   &+\left(R_{ u}^{n+1}, \varphi_u^{n+1}\right)  - b(\eta_u^{n+1},\varphi_p^{n+1}) +  \gamma_h(\eta_p^{n+1},\varphi_p^{n+1})  - \gamma_h(p^{n+1},\varphi_p^{n+1}) \nonumber \\
	&=   (\delta_t \eta_u^{n+1},\varphi_u^{n+1}) + a_{\gamma_h}\big((\eta_u^{n+1},\eta_p^{n+1}),(\varphi_u^{n+1},\varphi_p^{n+1})\big)
 \nonumber \\
	&+ c(\vec u^{n+1},\vec u^{n+1},\varphi_u^{n+1}) -  c(\vec u_h^{n+1},\vec u_h^{n+1},\varphi_u^{n+1})  
	 -	\frac{Ra}{Re^2Pr} (e_\theta^{n+1}\vec e_y,\varphi_u^{n+1}) \nonumber \\
	   &+\left(R_{ u}^{n+1}, \varphi_u^{n+1}\right)  - \gamma_h(p^{n+1},\varphi_p^{n+1})\label{eq-div-qmvt-improved}
\end{align}
Owing to the definition of $\vec{\tilde u}_h^{n+1}$ and $\tilde p_h^{n+1}$, we obtain
\begin{align*}
&a_{\gamma_h}\big((\eta_u^{n+1},\eta_p^{n+1}),(\varphi_u^{n+1},\varphi_p^{n+1})\big) - \gamma_h(p^{n+1},\varphi_p^{n+1}) \\
&= a_{0}\big((\vec u^{n+1},p^{n+1}),(\varphi_u^{n+1},\varphi_p^{n+1})\big) - a_{\gamma_h}\big((\vec{\tilde u}_h^{n+1},\tilde p_h^{n+1}),(\varphi_u^{n+1},\varphi_p^{n+1})\big)  = 0.
\end{align*}
Thus, \eqref{eq-div-qmvt-improved} simply reads
\begin{align}\nonumber
(\delta_t \varphi_u^{n+1},\varphi_u^{n+1}) &+\frac{1}{Re}a(\varphi_u^{n+1},\varphi_u^{n+1}) + \gamma_h(\varphi_p^{n+1},\varphi_p^{n+1}) \\
&=(\delta_t \eta_u^{n+1},\varphi_u^{n+1})+\left(R_{ u}^{n+1}, \varphi_u^{n+1}\right)  -	\frac{Ra}{Re^2Pr} (e_\theta^{n+1}\vec e_y,\varphi_u^{n+1})
 \nonumber \\
	&+ c(\vec u^{n+1},\vec u^{n+1},\varphi_u^{n+1}) -  c(\vec u_h^{n+1},\vec u_h^{n+1},\varphi_u^{n+1})  . \label{eq-div-qmvt-simple}
\end{align}

The left hand side of \eqref{eq-div-qmvt-simple} is bounded from below by
$$
 \frac{1}{2\delta t}\left( \|\varphi_u^{n+1}\|_{L^2}^2 - \|\varphi_u^{n}\|_{L^2}^2 + \|\varphi_u^{n+1}-\varphi_u^n\|_{L^2}^2\right) + \frac{\alpha}{Re}\|\varphi_u^{n+1}\|_{H^1}^2 + \gamma_h\|\varphi_p^{n+1}\|_{L^2}^2
$$
so we are left with the right hand side to bound from above. Using Young's inequality, we obtain
	\begin{align}
	\left| (\delta_t \eta_u^{n+1},\varphi_u^{n+1}) \right| &\leq  \lVert \delta_t \eta_u^{n+1}\rVert_{L^2} \lVert \varphi_u^{n+1}\rVert_{L^2} \leq \frac{1}{2}\lVert \delta_t \eta_u^{n+1}\rVert_{L^2}^2 + \frac12\lVert \varphi_u^{n+1}\rVert_{L^2}^2\label{bound_eq_u_dttheta} \\
		\left| (R_{\vec u}^{n+1},\varphi_u^{n+1}) \right| &\leq  \lVert R_{\vec u}^{n+1}\rVert_{L^2} \lVert \varphi_u^{n+1}\rVert_{L^2} \leq \frac{1}{2}\lVert R_{\vec u}^{n+1}\rVert_{L^2}^2 + \frac12\lVert \varphi_u^{n+1}\rVert_{L^2}^2, \label{bound_eq_u_end}
	\end{align}
		where $\kappa_{u}$ and $\kappa_p$ are some positive constants, independent of $h$, yet to be chosen.
		
	The Boussinesq term is treated using the decomposition of $e_\theta^{n+1}$ : 
	\begin{align}
\left| (e_\theta^{n+1}\vec e_y,\varphi_u^{n+1})\right| &\leq  \|e_\theta^{n+1}\|_{L^2} \|\varphi_u^{n+1}\|_{L^2} \leq \left( \|\eta_\theta^{n+1}\|_{L^2}+\|\varphi_\theta^{n+1}\|_{L^2}\right) \|\varphi_u^{n+1}\|_{L^2}\nonumber \\
&\leq \frac{1}{2}\|\eta_\theta^{n+1}\|_{L^2}^2 + \frac{1}{2}\|\varphi_\theta^{n+1}\|_{L^2}^2 +\|\varphi_u^{n+1}\|_{L^2}^2 \label{bound_boussinesq}
	\end{align}

The trilinear terms are treated using
	\begin{eqnarray}\nonumber
	&&c(\vec u^{n+1},\vec u^{n+1},\varphi_u^{n+1}) - c(\vec u_h^{n+1},\vec u_h^{n+1},\varphi_u^{n+1})\\ \nonumber
	&=& c(\vec u^{n+1},\eta_u^{n+1},\varphi_u^{n+1}) + c(\eta_u^{n+1}, \vec u^{n+1},\varphi_u^{n+1}) - c(\eta_u^{n+1},\eta_u^{n+1},\varphi_u^{n+1}) \\ &-&c( \varphi_u^{n+1}, \vec u^{n+1},\varphi_u^{n+1}) + c(\varphi_u^{n+1},\eta_u^{n+1},\varphi_u^{n+1})\label{trilinear_u}
	\end{eqnarray}
The first three terms are handled using  \eqref{eq-c-uHsigma}-\eqref{eq-c-vHsigma} and a Young's inequality, for $0<\sigma<1$, and the last two are treated with \eqref{eq-c-continu} and the Young's inequality $xy\leq \frac{x^4}{4} + \frac{3y^{4/3}}{4}$:
\begin{align}
\left| c(\vec u^{n+1},\eta_u^{n+1},\varphi_u^{n+1})\right|  &\leq C\lVert \vec u^{n+1} \rVert_{H^{1}} \lVert \eta_u^{n+1} \rVert_{H^\sigma} \lVert \varphi_u^{n+1} \rVert_{H^{1}}\nonumber \\
&\leq \frac{C^2}{4\kappa_u}\lVert \vec u^{n+1} \rVert_{H^{1}}^2 \lVert \eta_u^{n+1} \rVert_{H^\sigma} ^2 + \kappa_u \lVert \varphi_u^{n+1} \rVert_{H^{1}}^2, \label{bound_eq_u_c_1} \\
\left| c(\eta_u^{n+1}, \vec u^{n+1},\varphi_u^{n+1})\right| &\leq C\lVert \vec u^{n+1} \rVert_{H^{1}} \lVert \eta_u^{n+1} \rVert_{H^\sigma} \lVert \varphi_u^{n+1} \rVert_{H^{1}}\nonumber \\
&\leq \frac{C^2}{4\kappa_u}\lVert \vec u^{n+1} \rVert_{H^{1}}^2 \lVert \eta_u^{n+1} \rVert_{H^\sigma} ^2 + \kappa_u \lVert \varphi_u^{n+1} \rVert_{H^{1}}^2, \\
\left| c(\eta_u^{n+1},\eta_u^{n+1},\varphi_u^{n+1})\right| &\leq C\lVert \eta_u^{n+1} \rVert_{H^{1}}^2  \lVert \varphi_u^{n+1} \rVert_{H^{1}} \nonumber \\
&\leq \frac{C^2}{4\kappa_u} \lVert \eta_u^{n+1} \rVert_{H^1} ^4 + \kappa_u \lVert \varphi_u^{n+1} \rVert_{H^{1}}^2, \label{bound_eq_u_c_3}  \\
\left| c( \varphi_u^{n+1}, \vec u^{n+1},\varphi_u^{n+1})\right| &\leq C\lVert \varphi_u^{n+1} \rVert_{H^1} \lVert \vec u^{n+1} \rVert_{H^1}  \lVert \varphi_u^{n+1} \rVert_{H^{1}}^{1/2}\lVert \varphi_u^{n+1} \rVert_{L^2}^{1/2}  \nonumber \\
&\leq C\lVert \vec u^{n+1} \rVert_{H^1} \lVert \varphi_u^{n+1} \rVert_{H^1}^{3/2} \lVert \varphi_u^{n+1} \rVert_{L^2}^{1/2}  \nonumber \\
&\leq \frac{C^4}{4\kappa_u^3}\lVert \vec u^{n+1} \rVert_{H^1}^4\lVert \varphi_u^{n+1} \rVert_{L^2}^{2} + \frac{3\kappa_u}{4} \lVert \varphi_u^{n+1} \rVert_{H^1}^{2}, \\
\left| c( \varphi_u^{n+1}, \eta_u^{n+1},\varphi_u^{n+1})\right| &\leq C\lVert \varphi_u^{n+1} \rVert_{H^1} \lVert \eta_u^{n+1} \rVert_{H^1}  \lVert \varphi_u^{n+1} \rVert_{H^{1}}^{1/2}\lVert \varphi_u^{n+1} \rVert_{L^2}^{1/2}  \nonumber \\
&\leq C\lVert \eta_u^{n+1} \rVert_{H^1} \lVert \varphi_u^{n+1} \rVert_{H^1}^{3/2} \lVert \varphi_u^{n+1} \rVert_{L^2}^{1/2} \nonumber \\
&\leq \frac{C^4}{4\kappa_u^3}\mathcal N^4 \lVert \varphi_u^{n+1} \rVert_{L^2}^{2} + \frac{3\kappa_u}{4} \lVert \varphi_u^{n+1} \rVert_{H^1}^{2}. \label{bound_eq_u_c_end}
\end{align}

Note that for the last term, we used \eqref{eq:prop_stability_projection} to bound $\norm[H^1]{\eta_u^{n+1}}$.  Gathering all these estimates leads to 
\begin{align}
&\ \ \ \ \ \ \ \frac{1}{2\delta t}\left( \|\varphi_u^{n+1}\|_{L^2}^2 - \|\varphi_u^{n}\|_{L^2}^2 + \|\varphi_u^{n+1}-\varphi_u^n\|_{L^2}^2\right) + \frac{\alpha}{Re}\|\varphi_u^{n+1}\|_{H^1}^2 + \gamma_h\|\varphi_p^{n+1}\|_{L^2}^2 \nonumber \\
&\leq \frac{1}{2}\lVert R_{\vec u}^{n+1}\rVert_{L^2}^2 + \frac{1}{2}\lVert \delta_t \eta_u^{n+1}\rVert_{L^2}^2 + \frac{Re}{2Re^2Pr}\|\eta_\theta^{n+1}\|_{L^2}^2+ \frac{Ra}{2Re^2Pr}\|\varphi_\theta^{n+1}\|_{L^2}^2  \\
&  + \frac{9\kappa_u}2\lVert \varphi_u^{n+1}\rVert_{H^1}^2+ C'_u \|\varphi_u^{n+1}\|_{L^2}^2  + \frac{C^2}{4\kappa_u} \lVert \eta_u^{n+1} \rVert_{H^{1}}^4 + \frac{C^2}{2\kappa_u}\nx{\vec u^{n+1}} \norm[H^{\sigma}]{\vec u^{n+1}}^2,
\end{align}
where $C'_u = 1+\frac{Ra}{Re^2Pr}+ \frac{C^4}{2\kappa_u^3} \mathcal N^4$.

Choosing $\kappa_u=\frac{\alpha}{9Re}$, there exists $C$ independent of $h$, such that
\begin{align}
\frac{1}{2\delta t}&\left( \|\varphi_u^{n+1}\|_{L^2}^2 - \|\varphi_u^{n}\|_{L^2}^2 + \|\varphi_u^{n+1}-\varphi_u^n\|_{L^2}^2\right) + \frac{\alpha}{2Re}\|\varphi_u^{n+1}\|_{H^1}^2 + \frac{\gamma}{2}\|\varphi_p^{n+1}\|_{L^2}^2 \nonumber \\
&\leq C\left( \lVert R_{\vec u}^{n+1}\rVert_{L^2}^2 + \lVert \delta_t \eta_u^{n+1}\rVert_{L^2}^2 + \|\eta_\theta^{n+1}\|_{L^2}^2 + \lVert \eta_u^{n+1} \rVert_{H^{\sigma}}^2 + \lVert \eta_u^{n+1} \rVert_{H^{1}}^4\right. \nonumber \\
& + \|\varphi_\theta^{n+1}\|_{L^2}^2 +  \|\varphi_u^{n+1}\|_{L^2}^2\big). \label{eq_estimate_u_improved}
\end{align}

We proceed similarly to get an estimate on the temperature, based on the approximation properties of $\mathcal C_h$.
\begin{align}
\frac{1}{2\delta t}&\left( \|\varphi_\theta^{n+1}\|_{L^2}^2 - \|\varphi_\theta^{n}\|_{L^2}^2 + \|\varphi_\theta^{n+1}-\varphi_\theta^n\|_{L^2}^2\right) + \frac{\bar\alpha K}{2Re\;Pr}\|\varphi_\theta^{n+1}\|_{H^1}^2 \nonumber \\
&\leq C\left( \lVert R_\theta^{n+1}\rVert_{L^2}^2 + \lVert\delta_t \eta_\theta^{n+1}\rVert_{L^2}^2 + \lVert  \eta_\theta^{n+1}\rVert_{H^1}^2 + \lVert  \varphi_\theta^{n+1}\rVert_{L^2}^2\right).
 \label{eq_estimate_T_improved}
\end{align}
Summing the last two estimates and removing some positive terms from the left hand side yields:
\begin{align}\label{eq:natconv_stokes_before_gronwall}
\frac{1}{2\delta t}&\left( \|\varphi_u^{n+1}\|_{L^2}^2 - \|\varphi_u^{n}\|_{L^2}^2 + \|\varphi_\theta^{n+1}\|_{L^2}^2 - \|\varphi_\theta^{n}\|_{L^2}^2\right) \\
&\leq C\Big( \lVert R_{\vec u}^{n+1}\rVert_{L^2}^2 + \lVert \delta_t \eta_u^{n+1}\rVert_{L^2}^2 + \|\eta_\theta^{n+1}\|_{L^2}^2 + \lVert \eta_u^{n+1} \rVert_{H^{\sigma}}^2 + \lVert \eta_u^{n+1} \rVert_{H^{1}}^4  \nonumber \\
&+  \lVert R_\theta^{n+1}\rVert_{L^2}^2 + \lVert\delta_t \eta_\theta^{n+1}\rVert_{L^2}^2 + \lVert  \eta_\theta^{n+1}\rVert_{H^1}^2 + \|\varphi_u^{n+1}\|_{L^2}^2 +\lVert  \varphi_\theta^{n+1}\rVert_{L^2}^2\Big).\nonumber
\end{align}

Before applying Gronwall lemma, we want to bound all the terms in the right hand side (except for the last two), using the properties of $\mathcal C_h\theta$, $\vec{\tilde u}_h$, $\tilde p_h$. We first notice that
\begin{align*}
\lVert R_{\vec u}^{n+1}\rVert_{L^2}& \leq C\delta t \lVert \partial_{tt}\vec{u}\rVert_{L^{\infty}(L^2)}, & \lVert R_{\theta}^{n+1}\rVert_{L^2}& \leq C\delta t \lVert \partial_{tt}\theta\rVert_{L^{\infty}(L^2)}, \\
\norm[L^2]{\eta_\theta^{n+1}}&\leq \norm[H^1]{\eta_\theta^{n+1}}\leq Ch^{s_\theta}\norm[L^{\infty}(H^{1+s_\theta})]{\theta}, & 
	\lVert\delta_t \eta_\theta^{n+1}\rVert_{L^2} & \leq Ch^{s_\theta} \lVert \partial_{t}\theta\rVert_{L^{\infty}(H^{s_\theta})}.
\end{align*}

Then, using the linearity of the operator $\polP_{\gamma_h,0}^{X_h,Q_h}$ together with estimates \eqref{eq:stokes-operator-hs} and \eqref{Norm_proj_H1_final}, we infer that
\begin{align*}
\norm[H^\sigma]{\eta_u^{n+1}}^2 &\leq C\left( \frac{h^{4-2\sigma}}{\gamma_h^{2-\sigma}}\norm[L^\infty(H^2)]{\vec u}^2 + \frac{h^{4-\sigma}}{\gamma_h^{2-\sigma}}\norm[L^\infty(H^1)]{p}^2 + \gamma_h^2\norm[L^{\infty}(L^2)]{p}^2\right), \\
\norm[H^1]{\eta_u^{n+1}}^4 &\leq C\left( \frac{h^4}{\gamma_h^2}\norm[L^\infty(H^2)]{\vec u}^4 + \frac{h^{4}}{\gamma_h^{2}}\norm[L^\infty(H^1)]{p}^4 + \gamma_h^4\norm[L^{\infty}(L^2)]{p}^4\right), \\
\norm[L^2]{\delta_t\eta_u^{n+1}}^2 &\leq C\left( \frac{h^2}{\gamma_h}\norm[H^2]{\delta_t\vec u^{n+1}}^2 + \frac{h^{2}}{\gamma_h}\norm[H^1]{\delta_t p^{n+1}}^2 + \gamma_h^2\norm[L^2]{\delta_t p^{n+1}}^2\right), \\
&\leq C\left( \frac{h^2}{\gamma_h}\norm[L^{\infty}(H^2)]{\partial_t\vec u}^2 + \frac{h^{2}}{\gamma_h}\norm[L^{\infty}(H^1)]{\partial_t p}^2 + \gamma_h^2\norm[L^{\infty}(L^2)]{\partial_t p}^2\right).
\end{align*}
Owing to these estimates, we introduce the new quantity 
\begin{align*}
	E(\vec u,p,\theta,\delta t,h,\gamma) &:=  \delta t^2\norm[L^{\infty}(L^2)]{\partial_{tt}\vec u}^2 + \delta t^2\norm[L^{\infty}(L^2)]{\partial_{tt}\theta}^2 + h^{2s_\theta}\norm[L^{\infty}(H^{1+s_\theta})]{\theta}^2 \\
	&+ h^{2s_\theta}\norm[L^{\infty}(H^{s_\theta})]{\partial_t \theta}^2 
	+\frac{h^{4-2\sigma}}{\gamma_h^{2-\sigma}}\norm[L^\infty(H^2)]{\vec u}^2 + \frac{h^{4-\sigma}}{\gamma_h^{2-\sigma}}\norm[L^\infty(H^1)]{p}^2 \\
	&+ \gamma_h^2\norm[L^{\infty}(L^2)]{p}^2
	+\frac{h^4}{\gamma_h^2}\norm[L^\infty(H^2)]{\vec u}^4 + \frac{h^{4}}{\gamma_h^{2}}\norm[L^\infty(H^1)]{p}^4 \\
	&+ \gamma_h^4\norm[L^{\infty}(L^2)]{p}^4
	+\frac{h^2}{\gamma_h}\norm[L^{\infty}(H^2)]{\partial_t\vec u}^2 + \frac{h^{2}}{\gamma_h}\norm[L^{\infty}(H^1)]{\partial_t p}^2 + \gamma_h^2\norm[L^{\infty}(L^2)]{\partial_t p}^2,
\end{align*}
so that \eqref{eq:natconv_stokes_before_gronwall} yields
	\begin{align}
	&\frac{1}{\delta t}\left(\lVert\varphi_u^{n+1}\rVert_{L^2}^2-\lVert\varphi_u^{n}\rVert_{L^2}^2  +\lVert\varphi_\theta^{n+1}\rVert_{L^2}^2-\lVert\varphi_\theta^{n}\rVert_{L^2}^2 \right)  \\
	&\leq  C\left( E(\vec u,p,\theta,\delta t,h,\gamma)+  \|\varphi_\theta^{n+1}\|_{L^2}^2 +  \|\varphi_u^{n+1}\|_{L^2}^2 \right). \label{eq_before_gronwall_ncstokes}
	\end{align}

Finally, we use the discrete Gronwall lemma, and we obtain the desired estimate, provided we choose $\vec u_h^0$ and $\theta_h^0$ such that $\varphi_u^0=0$ and $\varphi_\theta^0=0$.

\section{Numerical results}\label{sec-numerics}

In this section, we present numerical results to support theoretical estimates. It is worth mentioning that we performed the tests on domains that are not of class $C^{1,1}$. We start by illustrating \eqref{Norm_proj_H1_final} and \eqref{Norm_proj_L2_final} before turning our attention to the natural convection problem. We also illustrate the gain in computational cost allowed by the use of the linear element instead of the standard Taylor-Hood elements. All simulations are performed using the open-source software FreeFem++ \cite{hecht-2012-JNM,freefem}.

\subsection{The modified projection $\polP_{\gamma_h,0}^{X_h,Q_h}$}

Given $(\vec u,p)$, we compute $(\vec{\tilde u_h},\tilde p_h)\in X_h\times  Q_h$ such that 
\begin{equation}\label{Variational_formulation_projecteur}
\forall (\vec v_h,q_h)\in X_h\times Q_h,\qquad a_{\gamma_h}\big((\vec{\tilde u_h}, \tilde p_h),(\vec v_h,q_h)\big) = a_0\big((\vec u, p),(\vec v_h,q_h)\big).
\end{equation}

We want to illustrate the results of Theorem~\ref{Theorem_projection_0}.
The results for the $P_1$-$P_1$ elements (i.e. $p_u=p_p=1$) are better than expected. Therefore, we also performed simulations using $P_1$-$P_0$ elements, in order to illustrate the sharpness of our error estimates \eqref{Norm_proj_H1_final}-\eqref{Norm_proj_L2_final}. The choice of $\gamma_h$ modifies the estimates and consequently the order of convergence. Different values of $\gamma_h$ are tested to reach the best estimate. We present some numerical results for the following values of $\gamma_h$: \begin{itemize}
\item $\gamma_h=10^{-7}$, which is not supposed to yield any convergence, neither for the velocity nor the pressure;
\item $\gamma_h = Re^{1/3}h^{2/3}$, which is the best choice for the convergence of velocity in the $H^1$ norm: this should yield convergence of order $2/3$ for the velocity ($L^2$ and $H^1$ norms) and order $1/3$ for the pressure;
\item $\gamma_h = Re^{1/2}h$, which is the best choice for the convergence of velocity in the $L^2$ norm: this should yield convergence for the velocity of order $1$ in the $L^2$ norm, order $1/2$ in the $H^1$ norm, but no convergence in pressure;
\item $\gamma_h = Re\,h^2$, which is not supposed to yield any convergence, and even explosion for the pressure.
\end{itemize}

\subsubsection{Steady Burggraf flow (case \texttt{MP-Bur})}\label{Steady Burggraf flow}

We first focus on the Burggraf manufactured solution, see~\cite{ArticleToolBox}. This case is a time-independent recirculating flow inside a square cavity $[0 , 1] \times [ 0 , 1]$. It is similar to the well-known  lid driven cavity flow, but the velocity singularity at the top corners of the cavity is avoided. 
The exact solution of the flow is:
\begin{eqnarray}
\label{burggraf-u}
u_1(x,y) &=& \chi g'(x) h'(y), \\\nonumber
u_2(x,y) &=& - \chi g''(x) h(y), \\\nonumber
p(x,y)&=&\tilde{p}(x,y)-\frac{1}{|\Omega|}\int_\Omega\tilde{p}(x,y),
\end{eqnarray}
with $\chi >0$ is a scaling parameter and functions and the function $\tilde p$, $g$ and $h$ are defined by:
\begin{eqnarray}
\tilde{p}(x,y)   &=& \frac{\chi}{Re} \left( h^{(3)}(y) g(x) + g''(x)h'(y) \right) + \frac{\chi^2}{2} g'(x)^2 \left( h(y)h''(y)-h'(y)^2 \right)\\\nonumber
g(x) &=& \frac{x^5}{5} - \frac{x^4}{2} + \frac{x^3}{3}, \\ \nonumber
h(y) &=& y^4 - y^2. \\ \nonumber
\end{eqnarray}
Note that  the velocity at the top border of the cavity is:
\begin{equation}
u_1(x,1) = 2\chi (x^4 - 2x^3 + x^2), \quad u_2(x,1) =0,
\end{equation}
which ensures the continuity of the velocity at the corners ($\vec{u}(0,1) =\vec{u}(1,1) =0$), since non-slip walls are imposed for the other borders:
$\vec{u}(x,0) = \vec{u}(0,y) = \vec{u}(1,y) = 0$. The Reynolds number $Re$ is taken equal to 1 and $\chi$ equal to 8.

In all the tables, the column "rate" reports the computed order of convergence based on the computed errors. Empty cells mean that the iterative solver did not converge.

\begin{table}[H]
	\begin{center}
		\begin{tabular}{|c||c|c||c|c||c|c||c|c|c|}
			\hline
			\multirow{2}*{}& \multicolumn{2}{c||}{$\gamma_h\sim10^{-7}$} &\multicolumn{2}{c||}{$\gamma_h\sim Re^{1/3}h^{2/3}$}  &\multicolumn{2}{c||}{$\gamma_h \sim Re^{1/2}h$} &\multicolumn{2}{c|}{$\gamma_h \sim Re\,h^2$} \\
			\cline{1-9}
			h & \text{err} & \text{rate} & \text{err} & \text{rate} & \text{err} & \text{rate} & \text{err} & \text{rate}\\
			\hline
			\hline
			1/20  & 4.42E-03 &      & 5.27E-02& & 2.32E-02 &      & 3.92E-03 &      \\ \hline
			1/40  & 1.14E-03 & 1.95 &3.63E-02 & 0.54 & 1.21E-02 & 0.94 & 1.01E-03 & 1.96 \\ \hline
			1/80  & 2.93E-04 & 1.96 & 2.44E-02 & 0.57& 6.25E-03 & 0.96 & 2.58E-04 & 1.97 \\ \hline
			1/160 & 7.44E-05 & 1.98 & 1.61E-02 & 0.60& 3.17E-03 & 0.98 & 6.55E-05 & 1.98 \\ \hline
			1/200 & 4.78E-05 & 1.98 &1.40E-02&0.62& 2.54E-03 & 0.99 & 4.21E-05 & 1.98 \\ \hline
		\end{tabular}
	\end{center}%
	\caption{Case \texttt{MP-Bur} : $\lVert \vec u-\vec u_h \rVert_{L^2}$ for $P_1$-$P_1$ elements.}
	\label{tab:UL2P1P1_Burggraf}
\end{table}

\begin{table}[H]
	\begin{center}
		\begin{tabular}{|c||c|c||c|c||c|c||c|c|c|}
			\hline
			\multirow{2}*{}& \multicolumn{2}{c||}{$\gamma_h\sim 10^{-7}$} &\multicolumn{2}{c||}{$\gamma_h\sim Re^{1/3}h^{2/3}$}  &\multicolumn{2}{c||}{$\gamma_h\sim Re^{1/2}h$} &\multicolumn{2}{c|}{$\gamma_h\sim Re\,h^2$} \\
			\cline{1-9}
			h & \text{err} & \text{rate} & \text{err} & \text{rate} & \text{err} & \text{rate} & \text{err} & \text{rate}\\
			\hline
			\hline
			1/20  & 2.09E+05 &      &4.38E-01& & 5.01E-01 &      & 8.45E+00 &       \\ \hline
			1/40  & 5.49E+04 & 1.93 &2.97E-01& 0.56& 2.82E-01 & 0.83 & 8.91E+00 & -0.08 \\ \hline
			1/80  & 1.41E+04 & 1.96 &2.00E-01& 0.57 & 1.56E-01 & 0.85 & 9.15E+00 & -0.04 \\ \hline
			1/160 & 3.56E+03 & 1.98 & 1.32E-01& 0.59 & 8.57E-02 & 0.86 & 9.27E+00 & -0.02 \\ \hline
			1/200 & 2.29E+03 & 1.99 & 1.16E-01 & 0.61 & 7.08E-02 & 0.86 & 9.30E+00 & -0.01 \\ \hline
		\end{tabular}
	\end{center}
	\caption{Case \texttt{MP-Bur}  : $\lVert \vec p-\vec p_h \rVert_{L^2}$ for $P_1$-$P_1$ element}
	\label{tab:PL2P1P1_Burggraf}
\end{table}

\begin{table}[H]
	\begin{center}
		\begin{tabular}{|c||c|c||c|c||c|c||c|c|c|}
			\hline
			\multirow{2}*{}& \multicolumn{2}{c||}{$\gamma_h\sim 10^{-7}$} &\multicolumn{2}{c||}{$\gamma_h\sim Re^{1/3}h^{2/3}$}  &\multicolumn{2}{c||}{$\gamma_h\sim Re^{1/2}h$} &\multicolumn{2}{c|}{$\gamma_h\sim Re\,h^2$} \\
			\cline{1-9}
			h & \text{err} & \text{rate} & \text{err} & \text{rate} & \text{err} & \text{rate} & \text{err} & \text{rate}\\
			\hline
			\hline
			1/20  & 3.11E-01 &      & 3.57E-01&& 2.79E-01 &      & 2.85E-01 &      \\ \hline						
			1/40  & 1.59E-01 & 0.97 &2.16E-01 &0.72 & 1.41E-01 & 0.98 & 1.45E-01 & 0.97 \\ \hline
			1/80  & 8.09E-02 & 0.98 &1.35E-01 &0.68 & 7.09E-02 & 0.99 & 7.37E-02 & 0.98 \\ \hline
			1/160 & 4.08E-02 & 0.99 &8.48E-02& 0.67& 3.55E-02 & 1.00 & 3.71E-02 & 0.99 \\ \hline
			1/200 & 3.27E-02 & 0.99 & 7.32E-02 & 0.66& 2.84E-02 & 1.00 & 2.97E-02 & 0.99 \\ \hline
		\end{tabular}
	\end{center}
	\caption{Case \texttt{MP-Bur}  : $\lVert \vec u-\vec u_h \rVert_{H^1}$ for $P_1$-$P_1$ element}
	\label{tab:UH1P1P1_Burggraf}
\end{table}

We note from Tables~\ref{tab:UL2P1P1_Burggraf}-\ref{tab:PL2P1P1_Burggraf}-\ref{tab:UH1P1P1_Burggraf} that ($P_1$-$P_1$) elements give the expected behaviour for $\gamma_h\sim Re^{1/3}h^{2/3}$ and  $\gamma_h\sim Re^{1/2}h$. Nevertheless, we find a better performance than expected for $\gamma_h\sim 10^{-7}$ and $\gamma_h\sim Re\;h^2$.  However, the predicted behaviour from Theorem~\ref{thm_proj_continu_first} is recovered with ($P_1$-$P_0$) elements, see Tables~\ref{tab:UL2P1P0_Burggraf}-\ref{tab:PL2P1P0_Burggraf}-\ref{tab:UH1P1P0_Burggraf}.

\begin{table}[H]
	\begin{center}
		\begin{tabular}{|c||c|c||c|c||c|c||c|c|c|}
			\hline
			\multirow{2}*{}& \multicolumn{2}{c||}{$\gamma_h\sim 10^{-7}$} &\multicolumn{2}{c||}{$\gamma_h\sim Re^{1/3}h^{2/3}$}  &\multicolumn{2}{c||}{$\gamma_h\sim Re^{1/2}h$} &\multicolumn{2}{c|}{$\gamma_h\sim Re\,h^2$} \\
			\cline{1-9}
			h & \text{err} & \text{rate} & \text{err} & \text{rate} & \text{err} & \text{rate} & \text{err} & \text{rate}\\
			\hline
			\hline
			1/20  & 1.52E-01 &       &5.39E-02& & 2.88E-02 &      & 9.09E-02 &      \\ \hline
			1/40  & 1.53E-01 & -0.01 &3.66E-02&0.56 & 1.45E-02 & 0.99 & 9.10E-02 & 0.00 \\ \hline
			1/80  & 1.53E-01 & 0.00  & 2.45E-02 & 0.58 & 7.30E-03 & 0.99 & 9.11E-02 & 0.00 \\ \hline
			1/160 & 1.53E-01 & 0.00  & 1.61E-02 & 0.60 & 3.67E-03 & 0.99 & 9.11E-02 & 0.00 \\ \hline
			1/200 &          &       &1.40E-02&0.62& 2.94E-03 & 1.00 &          &      \\ \hline
		\end{tabular}
	\end{center}
	\caption{Case \texttt{MP-Bur}  : $\lVert \vec u-\vec u_h \rVert_{L^2}$ for $P_1$-$P_0$ element}
	\label{tab:UL2P1P0_Burggraf}
\end{table}

\begin{table}[H]
	\begin{center}
		\begin{tabular}{|c||c|c||c|c||c|c||c|c|c|}
			\hline
			\multirow{2}*{}& \multicolumn{2}{c||}{$\gamma_h\sim 10^{-7}$} &\multicolumn{2}{c||}{$\gamma_h\sim Re^{1/3}h^{2/3}$}  &\multicolumn{2}{c||}{$\gamma_h\sim Re^{1/2}h$} &\multicolumn{2}{c|}{$\gamma_h\sim Re\,h^2$} \\
			\cline{1-9}
			h & \text{err} & \text{rate} & \text{err} & \text{rate} & \text{err} & \text{rate} & \text{err} & \text{rate}\\
			\hline
			\hline
			1/20  & 1.27E+06 &      &1.22E+00 & & 3.06E+00 &       & 5.26E+01 &       \\ \hline
			1/40  & 6.39E+05 & 0.99 & 9.76E-01 & 0.32 & 3.15E+00 & -0.04 & 1.06E+02 & -1.01 \\ \hline
			1/80  & 3.20E+05 & 1.00 & 7.72E-01 & 0.34 & 3.19E+00 & -0.02 & 2.12E+02 & -1.00 \\ \hline
			1/160 & 1.60E+05 & 1.00 &6.09E-01 & 0.34 & 3.22E+00 & -0.01 & 4.24E+02 & -1.00 \\ \hline
			1/200 &          &     &5.64E-01& 0.34 & 3.22E+00 & -0.01 &          &       \\ \hline
		\end{tabular}
	\end{center}
	\caption{Case \texttt{MP-Bur}  : $\lVert \vec p-\vec p_h \rVert_{L^2}$ for $P_1$-$P_0$ element}
	\label{tab:PL2P1P0_Burggraf}
\end{table}

\begin{table}[H]
	\begin{center}
		\begin{tabular}{|c||c|c||c|c||c|c||c|c|c|}
			\hline
			\multirow{2}*{}& \multicolumn{2}{c||}{$\gamma_h\sim 10^{-7}$} &\multicolumn{2}{c||}{$\gamma_h\sim Re^{1/3}h^{2/3}$}  &\multicolumn{2}{c||}{$\gamma_h\sim Re^{1/2}h$} &\multicolumn{2}{c|}{$\gamma_h\sim Re\,h^2$} \\
			\cline{1-9}
			h & \text{err} & \text{rate} & \text{err} & \text{rate} & \text{err} & \text{rate} & \text{err} & \text{rate}\\
			\hline
			\hline
			1/20  & 1.46E+00 &      & 3.60E-01 & & 3.07E-01 &      & 9.04E-01 &      \\ \hline
			1/40  & 1.47E+00 & 0.00 &2.17E-01 & 0.73 & 1.58E-01 & 0.96 & 8.93E-01 & 0.02 \\ \hline
			1/80  & 1.47E+00 & 0.00 & 1.35E-01 & 0;69 & 8.04E-02 & 0.98 & 8.91E-01 & 0.00 \\ \hline
			1/160 & 1.46E+00 & 0.00 &8.49E-02 &0.67 & 4.05E-02 & 0.99 & 8.90E-01 & 0.00 \\ \hline
			1/200 &          &      & 7.32E-02 & 0.66 & 3.24E-02 & 0.99 &          &      \\ \hline
		\end{tabular}
	\end{center}
	\caption{Case \texttt{MP-Bur}  : $\lVert \vec u-\vec u_h \rVert_{H^1}$ for $P_1$-$P_0$ element}
	\label{tab:UH1P1P0_Burggraf}
\end{table}

\subsubsection{Steady natural convection  (case \texttt{MP-NC})}\label{Steady natural convection}

The first test case \texttt{MP-Bur} is an academic validation, on a regular manufactured solution. We also assess the accuracy of the modified Stokes projection on a more realistic case.

To do so, we consider the classical problem of the thermally driven square cavity $[ 0 , 1] \times [ 0 , 1]$, filled with air, described by the system of equations \eqref{eq-div}-\eqref{eq-energ}. The left wall is kept at a constant hot temperature $\theta_h = 0.5$ and the right wall is kept at a constant cold temperature $\theta_c = -0.5$. Top and bottom walls are adiabatic. Natural convection flow is computed for the Rayleigh number $Ra = 10^4$. The Prandtl number is set to $Pr = 0.71$ and the Reynolds number to $Re=\sqrt{\frac{Ra}{Pr}}$.

We compute a reference solution with ($P_2$-$P_1$-$P_2$) finite elements for the velocity, pressure and temperature on a fine fixed grid with mesh size $h=1/500$. $\gamma_h$ is taken equal to $0$. The non linear system \eqref{eq-div-discrete}-\eqref{eq-energ-discrete} is solved using a Newton algorithm. The initial state consists of motionless air ($\vec{u}=0$),  with a linear distribution of the temperature. For the time integration, we use a second-order Gear (BDF2) scheme and set $\delta t=h$. It has been proven in \cite{ErrorEstimateHecht} that this choice ensures convergence. Therefore, we use this result as reference solution.

The computations are performed until a steady state is reached. Then we use the computed $(\vec u,p)$ in the right hand side of \eqref{Variational_formulation_projecteur}.  The results are similar to the results for Case \texttt{MP-Bur} and still in agreement with theoretical estimates. Since the results are similar, we only present the $L^2$ error on the velocity and the $L^2$ error on the pressure in Tables~\ref{tab:UL2P1P1_NC}-\ref{tab:PL2P1P1_NC} ($P_1$-$P_1$) and Tables~\ref{tab:UL2P1P0_NC}-\ref{tab:PL2P1P0_NC} ($P_1$-$P_0$).

\begin{table}[H]
	\begin{center}
		\begin{tabular}{|c||c|c||c|c||c|c||c|c|c|}
			\hline
			\multirow{2}*{}& \multicolumn{2}{c||}{$\gamma_h\sim 10^{-7}$} &\multicolumn{2}{c||}{$\gamma_h\sim Re^{1/3}h^{2/3}$}
			&\multicolumn{2}{c||}{$\gamma_h\sim Re^{1/2}h$} &\multicolumn{2}{c|}{$\gamma_h\sim Re\,h^2$} \\
			\cline{1-9}
			h & err & rate & err & rate & err & rate & err & rate\\
			\hline
			\hline
			1/20  & 6.86E-03 &      & 4.19E-02 &      & 3.51E-02 && 6.31E-03 &      \\ \hline
			1/40  & 1.72E-03 & 2.00 & 2.77E-02 & 0.59 & 1.86E-02 & 0.92 & 1.60E-03 & 1.98 \\ \hline
			1/80  & 4.28E-04 & 2.00 & 1.81E-02 & 0.62 & 9.56E-03 & 0.96 & 4.01E-04 & 2.00 \\ \hline
			1/160 & 1.07E-04 & 2.00 & 1.16E-02 & 0.64 & 4.85E-03 & 0.98 & 1.00E-04 & 2.00 \\ \hline
			1/320 &  &  & 7.42E-03 & 0.65 & 2.44E-03 & 0.99 &  &  \\ \hline
		\end{tabular}
	\end{center}
	\caption{Case \texttt{MP-NC}  : $\lVert \vec u-\vec u_h \rVert_{L^2}$ for $P_1$-$P_1$-element}\label{tab:UL2P1P1_NC}
\end{table}

\begin{table}[H]
	\begin{center}
		\begin{tabular}{|c||c|c||c|c||c|c||c|c|c|}
			\hline
			\multirow{2}*{}& \multicolumn{2}{c||}{$\gamma_h\sim 10^{-7}$} &\multicolumn{2}{c||}{$\gamma_h\sim Re^{1/3}h^{2/3}$}
			&\multicolumn{2}{c||}{$\gamma_h\sim Re^{1/2}h$} &\multicolumn{2}{c|}{$\gamma_h\sim Re\,h^2$} \\
			\cline{1-9}
			h & err & rate & err & rate & err & rate & err & rate\\
			\hline
			\hline
			1/20  & 7.33E+00 &       & 6.94E-03 &      & 6.53E-03 &          & 1.43E-02 &      \\ \hline
			1/40  & 2.31E+00 & 1.66  & 4.15E-03 & 0.74 & 3.50E-03 & 0.90 & 1.36E-02 & 0.08 \\ \hline
			1/80  & 1.25E+00 & 0.89  & 2.51E-03 & 0.73 & 1.84E-03 & 0.93& 1.33E-02 & 0.03 \\ \hline
			1/160 & 1.25E+00 & 0.00  & 1.54E-03 & 0.71 & 9.58E-04 & 0.94 & 1.31E-02 & 0.01 \\ \hline
			1/320 &  &  & 9.49E-04 & 0.69 & 5.01E-04 & 0.93 & & \\ \hline
		\end{tabular}
	\end{center}
	\caption{Case \texttt{MP-NC} : $\lVert \vec p-\vec p_h \rVert_{L^2}$ for $P_1$-$P_1$-element}
	\label{tab:PL2P1P1_NC}
\end{table}

\begin{table}[H]
	\begin{center}
		\begin{tabular}{|c||c|c||c|c||c|c||c|c|c|}
			\hline
			\multirow{2}*{}& \multicolumn{2}{c||}{$\gamma_h\sim 10^{-7}$} &\multicolumn{2}{c||}{$\gamma_h\sim Re^{1/3}h^{2/3}$}
			&\multicolumn{2}{c||}{$\gamma_h\sim Re^{1/2}h$} &\multicolumn{2}{c|}{$\gamma_h\sim Re\,h^2$} \\
			\cline{1-9}
			h & err & rate & err & rate & err & rate & err & rate\\
			\hline
			\hline
			1/20  & 1.35E-01 &      & 4.27E-02 &      & 3.71E-02 &        & 8.78E-02 &       \\ \hline
			1/40  & 1.35E-01 & 0.00 & 2.80E-02 & 0.61 & 1.99E-02 & 0.90 & 8.78E-02 & 0.00  \\ \hline
			1/80  & 1.35E-01 & 0.00 & 1.81E-02 & 0.63 & 1.03E-02 & 0.95 & 8.78E-02 & 0.00  \\ \hline
			1/160 & 1.35E-01 & 0.00 & 1.16E-02 & 0.64 & 5.23E-03 & 0.97 & 8.78E-02 & 0.00  \\ \hline
			1/320 & & & 7.43E-03 & 0.65 & 2.64E-03 & 0.99 &  &  \\ \hline
		\end{tabular}
	\end{center}
	\caption{Case \texttt{MP-NC} : $\lVert \vec u-\vec u_h \rVert_{L^2}$ for $P_1$-$P_0$-element}\label{tab:UL2P1P0_NC}
\end{table}

\begin{table}[H]
	\begin{center}
		\begin{tabular}{|c||c|c||c|c||c|c||c|c|c|}
			\hline
			\multirow{2}*{}& \multicolumn{2}{c||}{$\gamma_h\sim 10^{-7}$} &\multicolumn{2}{c||}{$\gamma_h\sim Re^{1/3}h^{2/3}$}
			&\multicolumn{2}{c||}{$\gamma_h\sim Re^{1/2}h$} &\multicolumn{2}{c|}{$\gamma_h\sim Re\,h^2$} \\
			\cline{1-9}
			h & err & rate & err & rate & err & rate & err & rate\\
			\hline
			\hline
			1/20  & 7.49E+01 &       & 1.33E-02 &      & 1.49E-02 &      & 9.23E-02 &       \\ \hline
			1/40  & 4.44E+01 & 0.75  & 1.05E-02 & 0.34 & 1.48E-02 & 0.00 & 1.88E-01 & -1.03 \\ \hline
			1/80  & 3.58E+01 & 0.31  & 8.32E-03 & 0.33 & 1.53E-02 & -0.05 & 3.79E-01 & -1.01 \\ \hline
			1/160 & 1.43E+01 & 1.32  & 6.62E-03 & 0.33 & 1.58E-02 & -0.04 & 7.59E-01 & -1.00 \\ \hline
			1/320 &  &  & 5.26E-03 & 0.33 & 1.60E-02 & -0.02 &  &  \\ \hline
		\end{tabular}
	\end{center}
	\caption{Case \texttt{MP-NC} : $\lVert \vec p-\vec p_h \rVert_{L^2}$ for $P_1$-$P_0$-element}\label{tab:PL2P1P0_NC}
\end{table}

As a result, even though ($P_1$-$P_1$) elements give better results than expected, the numerical results are in accordance with our error estimates. These estimates seems to be sharp since the predicted rates are recovered for ($P_1$-$P_0$) elements.

\subsection{Natural convection in 2D}

The convergence of our operator being assessed, we can illustrate the convergence properties for the natural convection problem.  Note that in numerical simulations, we used a second order discretization in time, which only improves the accuracy with respect to $\delta t$. The remaining of the estimate \eqref{eq-improved} still holds.

As for the modified projection, we illustrate Corollary \ref{eq-improved} on two examples. The first one consists of a manufactured solution, see \cite{nourgaliev2016fully}. The second example is a physical case of natural convection, compared to the reference solution described in Section~\ref{Steady natural convection}. A second validation is obtained by comparing the vertical velocity profile at mid-domain ($y=0.5$) to a spectral approximation~\cite{LeQuere91}.


\begin{table}[H]\begin{center}
		\begin{tabular}{|c|c|c|}
			\hline
			Leading terms	& $\gamma_h$ & expected order \\ \hline\hline
			\multirow{4}{*}{$\delta t, h^2, \frac{h^{4-\sigma}}{\gamma_h^{2-\sigma}},\gamma_h^2$ } & $10^{-7}$ & Stability \\ \cline{2-3} 
			& $Re^{1/3}h^{2/3}$ & Convergence with an order 2/3 \\ \cline{2-3} 
			& $Re^{1/2}h$ & Convergence with an order 1 \\ \cline{2-3} 
			& $Re\,h^2$ & Stability \\ \hline
		\end{tabular}
	\end{center}
	\caption{Expected order of convergence for $\lVert u-u_h\rVert_{L^2}$ and $\lVert \theta-\theta_h\rVert_{L^2}$}
	\label{tab:L2-norm-U-expected-order-NC}
\end{table}

\subsubsection{Manufactured solution  (case \texttt{NC-Nour})}

We first consider the manufactured time-dependent solution suggested in \cite{nourgaliev2016fully}:
\begin{align}
\label{eq-manufN}
u_1(x,y,t) &= \left( \delta U_0 + \alpha_u \, \sin(t) \right) \, \cos(x+ \gamma_1 t) \, \sin(y+ \gamma_2 t), \\ \nonumber
u_2(x,y,t) &= - \left( \delta U_0 + \alpha_u \sin(t) \right) \, \sin(x+ \gamma_1 t) \, \cos(y+ \gamma_2 t), \\ \nonumber
\theta(x,y,t) &= \bar{\theta} + \left( \delta \theta_0 + \alpha_t \sin(t) \right) \, \cos(x+ \gamma_1 t) \, \sin(y+ \gamma_2 t), \\ \nonumber
p(x,y,t) &= \bar{P} + \left(\delta P_0 + \alpha_p \sin(t) \right) \, \sin(x+ \gamma_1 t) \, \cos(y+ \gamma_2 t), 
\end{align}
The values of the constants are reported in Table \ref{tab-constant}.
\begin{table}[t]
	\centering
	\begin{tabular}{*{10}{c}}
		$\gamma_1$ & $\gamma_2$ & $\bar{P}$ & $\bar{\theta}$ & $\delta P_0$ & $\delta \theta_0$ & $\delta U_0$ & $\alpha_p$ & $\alpha_u$ & $\alpha_t$\\
		\midrule
		$0.1$ & $0.1$ & $0$ & $1.0$ &  $0.1$ & $1.0$ & $1.0$ & $0.05$ & $0.4$ & $0.1$ \\
	\end{tabular}
	\caption{Parameters for case \texttt{NC-Nour} \eqref{eq-manufN}.}
	\label{tab-constant}
\end{table}
The corresponding source terms are:
\begin{eqnarray}
f_{u_1} &=& \alpha_u \, \cos(t) \, \cos(a) \sin(b) - U_c \, \gamma_1 \, \sin(a) \sin(b) + U_c \, \gamma_2  \, \cos(a)\cos(b) \\ \nonumber
& & - U_c \,  u_1(x,y,t) \, \sin(a) \sin(b) + U_c \,  u_2(x,y,t) \, \cos(a) \cos(b)
+ P_c \, \cos(a) \cos(b)\\ \nonumber
& & + \frac{2}{Re} \, u_1(x,y,t), \\	  \nonumber
f_{u_2} &=& - \alpha_u \,  \cos(t)  \, \sin(a) \cos(b) - U_c \,  \gamma_1  \,  \cos(a) \cos(b) + U_c \,  \gamma_2 \,  \sin(a)\sin(b) \\ \nonumber
& & - U_c \,  u_1(x,y,t)  \,  \cos(a) \cos(b) + U_c  \, u_2(x,y,t)  \,  \sin(a) \sin(b)
-  P_c  \,  \sin(a)  \,  \sin(b)\\ \nonumber
& &+ \frac{2}{Re} \,  u_2(x,y,t)
- \frac{Ra}{Pr Re^2} \,  T(x,y,t), \\  \nonumber
f_{\theta} &=& \alpha_t \,  \cos(t) \,  \cos(a) \sin(b) -  \theta_c  \,  \gamma_1 \,  \sin(a) \sin(b) + \theta_c \,   \gamma_2  \,  \cos(a)\cos(b) \\ \nonumber
& &-  \theta_c \,  u_1(x,y,t)  \,  \sin(a) \sin(b)  
+   \theta_c  \,  u_2(x,y,t)  \, \cos(a) \cos(b) 
+ \frac{2 K}{Re Pr} \,  \theta_c  \, \cos(a) \sin(b), \nonumber
\end{eqnarray}
where $a = (x+ \gamma_1 t), \,
b = (y+ \gamma_2 t)$ and  
$U_c = (\delta U_0 + \alpha_u \sin(t)), \,
\theta_c = (\delta \theta_0 + \alpha_u \sin(t)), \,
P_c = (\delta P_0 + \alpha_u \sin(t))$.

We want to assess the accuracy of our estimates with respect to $h$. Since we use a second order scheme in time, we choose $\delta t\simeq h$. Dimensionless parameters are chosen to emulate the convection of air, with a Rayleigh number $Ra = 10^6$, a Prandtl number $Pr = 0.71$ and Reynolds number $Re=\sqrt{Ra/Pr}$. The errors are computed at the final time $t_f=\pi/2$. 


\begin{table}[H]
	\begin{center}
		\begin{tabular}{|c||c|c||c|c||c|c||c|c|}
			\hline
			\multirow{2}*{}& \multicolumn{2}{c||}{$\gamma_h\sim 10^{-7}$} &\multicolumn{2}{c||}{$\gamma_h\sim Re^{1/3}h^{2/3}$}  &\multicolumn{2}{c||}{$\gamma_h\sim Re^{1/2}h$} &\multicolumn{2}{c|}{$\gamma_h\sim Re\,h^2$} \\
			\cline{1-9}
			h & \text{err} & \text{rate} & \text{err} & \text{rate} & \text{err} & \text{rate} & \text{err} & \text{rate} \\
			\hline
			\hline
			1/20  & 8.79E-04 &      & 1.49E-02 &      & 1.71E-02 &      & 2.48E-02 &      \\ \hline
			1/40  & 2.15E-04 & 2.03 & 9.89E-03 & 0.59 & 9.48E-03 & 0.85 & 8.31E-03 & 1.58 \\ \hline
			1/80  & 5.25E-05 & 2.03 & 6.48E-03 & 0.61 & 4.99E-03 & 0.93 & 2.22E-03 & 1.90 \\ \hline
			1/160 & 1.30E-05 & 2.01 & 4.19E-03 & 0.63 & 2.56E-03 & 0.96 & 5.64E-04 & 1.98 \\ \hline
			1/320 &          &      & 2.69E-03 & 0.64 & 1.30E-03 & 0.98 & 1.42E-04 & 1.99 \\ \hline
		\end{tabular}
	\end{center}
	\caption{Case \texttt{NC-Nour} : $\lVert \vec u-\vec u_h \rVert_{L^2}$ for $P_1$-$P_1$-$P_1$ element}
	\label{tab:Nourgaliev-UL2P1P1}
\end{table}

\begin{table}[H]
	\begin{center}
		\begin{tabular}{|c||c|c||c|c||c|c||c|c|}
			\hline
			\multirow{2}*{}& \multicolumn{2}{c||}{$\gamma_h\sim 10^{-7}$} &\multicolumn{2}{c||}{$\gamma_h\sim Re^{1/3}h^{2/3}$}  &\multicolumn{2}{c||}{$\gamma_h\sim Re^{1/2}h$} &\multicolumn{2}{c|}{$\gamma_h\sim Re\,h^2$} \\
			\cline{1-9}
			h & \text{err} & \text{rate} & \text{err} & \text{rate} & \text{err} & \text{rate} & \text{err} & \text{rate} \\		\hline\hline
			1/20  & 1.38E-03 &      & 4.57E-03 &      & 5.25E-03 &      & 8.16E-03 &      \\ \hline
			1/40  & 3.37E-04 & 2.03 & 2.41E-03 & 0.92 & 2.31E-03 & 1.19 & 2.03E-03 & 2.01 \\ \hline
			1/80  & 8.47E-05 & 1.99 & 1.47E-03 & 0.71 & 1.14E-03 & 1.02 & 5.31E-04 & 1.94 \\ \hline
			1/160 & 2.12E-05 & 2.00 & 9.34E-04 & 0.66 & 5.75E-04 & 0.99 & 1.35E-04 & 1.98 \\ \hline
			1/320 &          &      & 5.93E-04 & 0.65 & 2.88E-04 & 0.99 & 3.38E-05 & 2.00 \\ \hline
		\end{tabular}
		\caption{Case \texttt{NC-Nour} : $\lVert \theta-\theta_h \rVert_{L^2}$ for $P_1$-$P_1$-$P_1$ element}
		\label{tab:Nourgaliev-TL2P1P1}
	\end{center}
\end{table}

\begin{table}[H]
	\begin{center}
		\begin{tabular}{|c||c|c||c|c||c|c||c|c|}
			\hline
			\multirow{2}*{}& \multicolumn{2}{c||}{$\gamma_h\sim 10^{-7}$} &\multicolumn{2}{c||}{$\gamma_h\sim Re^{1/3}h^{2/3}$}  &\multicolumn{2}{c||}{$\gamma_h\sim Re^{1/2}h$} &\multicolumn{2}{c|}{$\gamma_h\sim Re\,h^2$} \\
			\cline{1-9}
			h & \text{err} & \text{rate} & \text{err} & \text{rate} & \text{err} & \text{rate} & \text{err} & 	\text{rate} \\ 	\hline\hline
			1/20  & 1.51E+04 &      & 6.81E-03 &      & 8.07E-03 &      & 1.44E-02 &      \\ \hline
			1/40  & 3.81E+03 & 1.99 & 4.71E-03 & 0.53 & 4.51E-03 & 0.84 & 3.96E-03 & 1.87 \\ \hline
			1/80  & 9.52E+02 & 2.00 & 2.81E-03 & 0.74 & 2.17E-03 & 1.06 & 1.09E-03 & 1.86 \\ \hline
			1/160 & 2.38E+02 & 2.00 & 1.70E-03 & 0.73 & 1.04E-03 & 1.06 & 5.27E-04 & 1.05 \\ \hline
			1/320 &          &      & 1.05E-03 & 0.70 & 5.11E-04 & 1.03 & 5.25E-04 & 0.01 \\ \hline
		\end{tabular}
		\caption{Case \texttt{NC-Nour} : $\lVert \vec p-\vec p_h \rVert_{L^2}$ for $P_1$-$P_1$-$P_1$ element}
		\label{tab:Nourgaliev-PL2P1P1}
	\end{center}
\end{table}

Since the error estimate of natural convection involves the error estimates of modified projection, we observe the same behaviour as before i.e. the ($P_1$-$P_1$) finite elements seem to yield a better convergence rate than expected. However, these results are in good agreement with our estimates. In particular, the correct convergence rates for the velocity and temperature are observed for $\gamma_h=Re^{1/3}h^{2/3}$ and $\gamma_h=Re^{1/2}h$ and the case $\gamma_h=10^{-7}$ yields computational issues for small values of $h$. 

\subsubsection{Natural convection in a square (case \texttt{NC-Sq})}\label{NC-Numerical-Comparison}

We are now interested in a physical case. We consider here the reference solution described in Section~\ref{Steady natural convection}. We numerically solve equations describing the classical natural convection with a finite element scheme ($P_1$-$P_1$-$P_1$) and we compare our results to this reference solution. The Rayleigh number is taken as $Ra=10^4$, the Prandtl number $Pr=0.71$ and the Reynolds number is $Re=\sqrt{Ra/Pr}$. The results are presented in Tables~\ref{tab:NC-Sq_u}-\ref{tab:NC-Sq_theta}-\ref{tab:NC-Sq_p}.

\begin{table}[H]
	\begin{center}
		\begin{tabular}{|c||c|c||c|c||c|c||c|c|}
			\hline
			\multirow{2}*{}&\multicolumn{2}{c||}{$\gamma_h\sim 10^{-7}$} &\multicolumn{2}{c||}{$\gamma_h\sim Re^{1/3}h^{2/3}$}
			&\multicolumn{2}{c||}{$\gamma_h\sim Re^{1/2}h$} &\multicolumn{2}{c|}{$\gamma_h\sim Re\,h^2$} \\
			\cline{1-9}
			h  & \text{err} & \text{rate} & \text{err} & \text{rate} & \text{err} & \text{rate} & \text{err} & \text{rate}\\
			\hline
			\hline
			1/20  & 5.57E-03 &      & 1.15E-02 &      & 6.08E-03 &      & 5.28E-03 &      \\ \hline
			1/40  & 1.39E-03 & 2.00 & 7.00E-03 & 0.71 & 2.34E-03 & 1.38 & 1.34E-03 & 1.98 \\ \hline
			1/80  & 3.47E-04 & 2.00 & 4.43E-03 & 0.66 & 1.08E-03 & 1.12 & 3.36E-04 & 1.99 \\ \hline
			1/160 & 8.68E-05 & 2.00 & 2.81E-03 & 0.66 & 5.30E-04 & 1.03 & 8.50E-05 & 1.98 \\ \hline
			1/320 & 3.23E-05 & 1.43 & 8.09E-04 & 1.80 & 2.65E-04 & 1.00 & 3.21E-05 & 1.40 \\ \hline
		\end{tabular}
	\end{center}
	\caption{Case \texttt{NC-Sq} : $\lVert \vec u-\vec u_h \rVert_{L^2}$ for $P_1$-$P_1$-$P_1$ elements}\label{tab:NC-Sq_u}
\end{table}

\begin{table}[H]
	\begin{center}
		\begin{tabular}{|c||c|c||c|c||c|c||c|c|}
			\hline
			\multirow{2}*{}&\multicolumn{2}{c||}{$\gamma_h\sim 10^{-7}$} 	&\multicolumn{2}{c||}{$\gamma_h\sim Re^{1/3}h^{2/3}$}
			&\multicolumn{2}{c||}{$\gamma_h\sim Re^{1/2}h$} &\multicolumn{2}{c|}{$\gamma_h\sim Re\,h^2$} \\
			\cline{1-9}
			h  & \text{err} & \text{rate} & \text{err} & \text{rate} & \text{err} & \text{rate} & \text{err} & \text{rate}\\
			\hline
			\hline
			1/20  & 4.33E-03 &      & 1.76E-02 &      & 8.86E-03 &      & 6.59E-03 &      \\ \hline
			1/40  & 1.08E-03 & 2.01 & 1.04E-02 & 0.76 & 3.65E-03 & 1.28 & 1.67E-03 & 1.98 \\ \hline
			1/80  & 2.74E-04 & 1.97 & 6.42E-03 & 0.69 & 1.66E-03 & 1.14 & 4.24E-04 & 1.98 \\ \hline
			1/160 & 1.00E-04 & 1.45 & 4.06E-03 & 0.66 & 8.21E-04 & 1.01 & 1.41E-04 & 1.58 \\ \hline
			1/320 & 5.54E-05 & 0.86 & 1.20E-03 & 1.76 & 4.25E-04 & 0.95 & 6.70E-05 & 1.08 \\ \hline
		\end{tabular}
		\caption{Case \texttt{NC-Sq} : $\lVert \theta-\theta_h \rVert_{L^2}$ for $P_1$-$P_1$-$P_1$ elements}\label{tab:NC-Sq_theta}
	\end{center}
\end{table}

\begin{table}[H]
	\begin{center}
		\begin{tabular}{|c||c|c||c|c||c|c||c|c|}
			\hline
			\multirow{2}*{}&\multicolumn{2}{c||}{$\gamma_h\sim 10^{-7}$} 	&\multicolumn{2}{c||}{$\gamma_h\sim Re^{1/3}h^{2/3}$}
			&\multicolumn{2}{c||}{$\gamma_h\sim Re^{1/2}h$} &\multicolumn{2}{c|}{$\gamma_h\sim Re\,h^2$} \\
			\cline{1-9}
			h  & \text{err} & \text{rate} & \text{err} & \text{rate} & \text{err} & \text{rate} & \text{err} & 	\text{rate}\\
			\hline
			\hline
			\hline
			1/20  & 2.34E-02 &      & 7.73E-03 &      & 1.15E-02 &      & 1.46E-02 &      \\ \hline
			1/40  & 2.17E-02 & 0.11 & 4.05E-03 & 0.93 & 7.22E-03 & 0.67 & 1.36E-02 & 0.10 \\ \hline
			1/80  & 2.10E-02 & 0.04 & 2.13E-03 & 0.93 & 4.32E-03 & 0.74 & 1.33E-02 & 0.04 \\ \hline
			1/160 & 2.08E-02 & 0.02 & 1.15E-03 & 0.89 & 2.51E-03 & 0.78 & 1.31E-02 & 0.01 \\ \hline
			1/320 & 2.07E-02 & 0.01 & 7.13E-04 & 0.69 & 1.46E-03 & 0.79 & 1.31E-02 & 0.01 \\ \hline
		\end{tabular}
	\end{center}
	\caption{Case \texttt{NC-Sq} : $\lVert \vec p-\vec p_h \rVert_{L^2}$ for $P_1$-$P_1$-$P_1$ elements}\label{tab:NC-Sq_p}
\end{table}

Once again, the results are in accordance with the theoretical results, even though the case $\gamma_h=Re\; h^2$ exhibits slightly better results than expected.

We strengthen our code validation by comparing the final state with a profile obtained by a spectral code \cite{LeQuere91}. The vertical $v_{FE}$ velocity profile is extracted at mid-domain ($y=0.5$) and plotted in Figure \ref{NC_Comparison_LeQuere} for the pair ($P_1$-$P_1$) and the different values of $\gamma_h$ and compared to the reference solution $v_{LQ}$. The difference $\|v_{FE}-v_{LQ}\|_{L^2}$ is reported in table~\ref{Tab:ErrL2_LEQUERE}.

\begin{figure}
		\centering
		\includegraphics[width=0.7\textwidth]{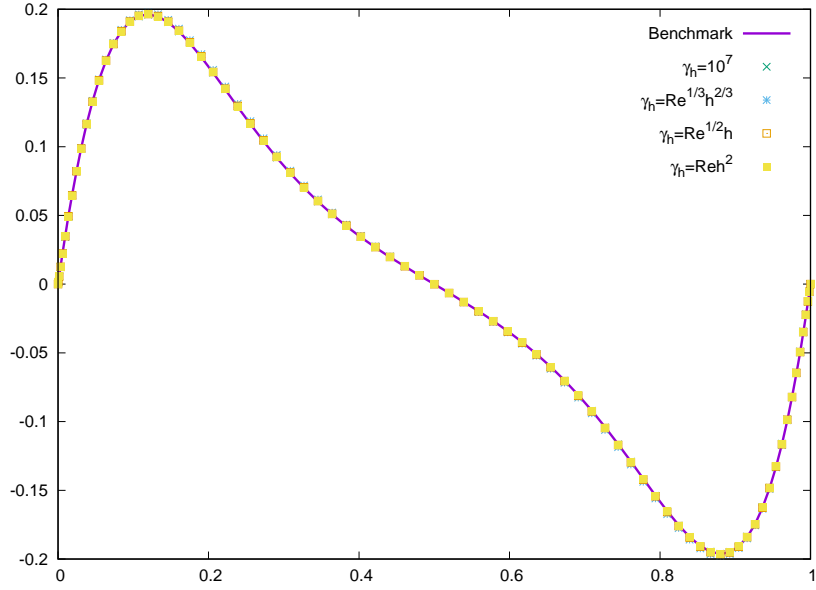} \hfill
\caption{Case \texttt{NC-Sq}: vertical velocity profile at $y=0.5$; comparison with results from \cite{LeQuere91}. The mesh size is $h=1/80$.}
	\label{NC_Comparison_LeQuere}
\end{figure}

\begin{table}[H]
	\begin{center}
		\begin{tabular}{|c||c|c||c|c||c|c||c|c|}
			\hline
			\multirow{2}*{}& \multicolumn{2}{c||}{$\gamma_h=10^{-7}$} &\multicolumn{2}{c||}{$\gamma_h=Re^{1/3} h^{2/3}$} &\multicolumn{2}{c||}{$\gamma_h=Re^{1/2}h$} &\multicolumn{2}{c|}{$\gamma_h=Re\,h^2$} \\
			\cline{1-9}
			h & \text{err} & \text{rate} & \text{err} & \text{rate} & \text{err} & \text{rate} & \text{err} & \text{rate} \\
			\hline
			\hline
			1/20  & 5.08E-03 &      & 7.76E-03 &      & 5.14E-03 &      & 4.85E-03 &      \\ \hline
			1/40  & 6.01E-04 & 3.08 & 1.55E-03 & 2.32 & 6.32E-04 & 3.02 & 5.57E-04 & 3.12 \\ \hline
			1/80  & 7.09E-05 & 3.08 & 4.29E-04 & 1.85 & 8.30E-05 & 2.93 & 6.43E-05 & 3.11 \\ \hline
			1/160 & 7.36E-06 & 3.27 & 1.42E-04 & 1.60 & 1.09E-05 & 2.93 & 6.59E-06 & 3.29 \\ \hline
			1/320 & 7.34E-07 & 3.33 & 1.01E-05 & 3.82 & 1.24E-06 & 3.13 & 6.29E-07 & 3.39 \\ \hline
		\end{tabular}
		\caption{Case \texttt{NC-Sq}: error on the centerline velocity $\lVert v_{FE}-v_{LQ} \rVert_{L^2}$.}
		\label{Tab:ErrL2_LEQUERE}
	\end{center}
\end{table}

Differences decrease when the mesh resolution is increased. Our comparison with the spectral code guarantees that our code gives a correct final solution. The numerical results for these different cases demonstrate the validity of Corollary \eqref{eq-improved}. The numerical results we presented confirm that the choice of $\gamma_h=Re^{1/3}h^{2/3}$ is suitable to obtain convergence on both velocity and pressure, but with non optimal rate $2/3$ for the velocity. If one is interested only in the velocity accuracy, then the choice $\gamma_h=Re^{1/2}h$ yields the best convergence rate on the velocity, while loosing convergence on the pressure. In summary, these two choices along with $P_1$-$P_1$ finite elements are suitable for simulating natural convection problems.

\subsection{Computational cost}

\subsubsection{2D-Natural Convection}

As stated before, the non linear system \eqref{eq-div-discrete}-\eqref{eq-qmvt-discrete}-\eqref{eq-energ-discrete} is solved using a Newton algorithm. The number of Newton iterations does not change when changing the order of the polynomials. Thus, using a direct solver for the linear systems that are involved ensures a computational gain for the linear element compared to the quadratic element. However, it is worth mentioning that, the use of an iterative solver (e.g. GMRES as in our simulations) yields a reduction of the computational cost, even though the resulting systems are more ill-conditioned.

We compare the computation time between ($P_1$-$P_1$-$P_1$) and ($P_2$-$P_1$-$P_2$) finite elements for some of the previously tested cases. In parallel, we compare the number of degrees of freedom for both pairs related to the mesh size $h$. Since we observe similar behaviours for different values of $h$, we report in Table~\ref{Tab:CPU_NC} the values for $h=1/160$. The number of degrees of freedom for the ($P_2$-$P_1$-$P_2$) is about three times that of ($P_1$-$P_1$-$P_1$) elements.

\begin{table}[H]
	\begin{center}
		\begin{tabular}{c|c|c|c||c|c|c|}
			\hline
			\multicolumn{1}{|c||}{}        & \multicolumn{3}{c||}{Case \texttt{NC-Nour}}  & \multicolumn{3}{c|}{Case \texttt{NC-Sq}}   \\ \hline\hline
			\multicolumn{1}{|c||}{$\gamma_h$}        &$ P_1$-$P_1$-$P_1$  & $P_2$-$P_1$-$P_2$ & rate  & $P_1$-$P_1$-$P_1$  & $P_2$-$P_1$-$P_2$  & rate  \\ \hline\hline
			\multicolumn{1}{|c||}{$10^{-7}$}         &   40 683 & 8 641  & 0.21   & 84 282 & 16 607 & 0.20   \\ \hline
			\multicolumn{1}{|c||}{$Re^{1/3}h^{2/3}$} & 2 465  & 8 371  & 3.40 & 3 432 & 12 438 & 3.62    \\ \hline
			\multicolumn{1}{|c||}{$Re^{1/2}h $}     & 2 419  & 8 424  & 3.48   & 4 499 & 12 714 & 2.83  \\ \hline
			\multicolumn{1}{|c||}{$Re\,h^2$}           &  2 470  & 8 575  & 3.47 & 13 951 & 14 591 & 1.05  \\ \hline\hline
			\multicolumn{1}{|c||}{ndof}              &  103 684 & 335 044  & 3.23 &  103 684 & 335 044  & 3.23 \\ \hline
		\end{tabular}
		\caption{CPU-time (s) for different choices of $\gamma_h$, with $h=1/160$.}
		\label{Tab:CPU_NC}
	\end{center}
\end{table}

It is interesting to note that the choice of $\gamma_h$ with Taylor-Hood elements ($P_2$-$P_1$) for velocity and pressure, and $P_2$ for temperature does not affect the computational time, while it has a visible impact for the ($P_1$-$P_1$-$P_1$) computations. The choices $\gamma_h\sim Re^{1/3}h^{2/3}$ and $\gamma_h\sim Re^{1/2}h$, for which we have proven convergence of the scheme, give the smallest computation times. This choices also bring a noticeable gain compared to $P_2$ computations.

\subsubsection{3D Natural Convection}

We now consider the same case of natural convection of air described in \ref{Steady natural convection} and add the third dimension in space. We thus simulate the thermally driven cubic cavity $[0,1]^3$, filled with air. The temperature is fixed on the left (hot) wall surface and the right (cold) wall surface. All the other lateral surfaces are adiabatic. No-slip walls are applied for the velocity on all boundary surfaces. Simulations are made for $Ra=10^4$ on a fixed and uniform mesh with $h=1/40$ for the finite element pairs ($P_1$-$P_1$-$P_1$) and ($P_2$-$P_1$-$P_2$). CPU-time for different values of $\gamma_h$ is given in Table~\ref{Tab:CPU_Convection_Naturelle_3D}.

\begin{table}[H]
	\begin{center}
		\begin{tabular}{c|c|c|c|}
			\hline
			\multicolumn{1}{|c||}{$\gamma_h$}        & $P_1$-$P_1$-$P_1$  & $P_2$-$P_1$-$P_2$  & rate  \\ \hline\hline
			\multicolumn{1}{|c||}{$10^{-7}$}         & 7 761   & 20 059     & 2.62  \\ \hline
			\multicolumn{1}{|c||}{$Re^{1/3}h^{2/3}$} & 5 476  & 17699   & 3.23  \\ \hline
			\multicolumn{1}{|c||}{$Re^{1/2}h $}     & 5 713   & 17 964   & 3.14  \\ \hline
			\multicolumn{1}{|c||}{$Re\,h^2$}           & 6 043   & 19 084   & 3.16  \\ \hline\hline
			\multicolumn{1}{|c||}{ndof}              & 344 605    & 2 194 685   & 6.37  \\ \hline
		\end{tabular}
		\caption{CPU-time for 3D-Natural-Convection.}
		\label{Tab:CPU_Convection_Naturelle_3D}
	\end{center}
\end{table}
As for the 2D case, we observe a non negligible reduction in the computational time, which supports the use of low-order finite elements.

\section{Conclusion}\label{sec-conclusion}

We introduced a new projection operator to establish an error estimate for the approximation of the natural convection problem with linear elements. Using this operator and choosing a stabilization $\gamma_h$ accordingly, we are able to recover almost first order accuracy in space. The error estimates on the projection and for the natural convection problem are validated with extensive examples, which indicate that the predicted convergence rate is optimal. Moreover, the choice of the stabilization parameter can be done in two ways. Using $\gamma_h\sim Re^{1/2}h$  gives a better convergence for the velocity, but does not yield any convergence for the pressure. Using $\gamma_h\sim Re^{1/3}h^{2/3}$ would allow a slow convergence in pressure, even though it reduces the accuracy for the velocity. In both cases, the computational time is reduced, when compared to the standard Taylor-Hood elements for the fluid equations. This makes the suggested scheme tractable for applications, even for 3D problems.

Preliminary simulations (not shown here) suggest that low order finite element approximation might also be used for more complex applications, such as simulation of PCM (e.g. cases presented in \cite{ArticleToolBox,ParallelToolBox}). This remains to be proven using similar techniques : however, the analysis of convergence is more involved, due to the phase change which adds non linear terms in the energy equation.

\bibliographystyle{elsarticle-num}
\bibliography{bib_P1_P1.bib}

\end{document}